\newtheorem{theorem}{Theorem}[section]
\newtheorem{theor}{Theorem}
\newtheorem{lm}[theorem]{Lemma}
\newtheorem{pr}[theorem]{Proposition}
\newcommand{\ov}{\overrightarrow}
\begin{document}
\title{Routh's theorem for simplices}
\author{Franti\v sek Marko}
\address{Penn State Hazleton, 76 University Drive, Hazleton, PA 18202, USA}
\email{fxm13@psu.edu}
\author{Semyon Litvinov}
\address{Penn State Hazleton, 76 University Drive, Hazleton, PA 18202, USA}
\email{snl2@psu.edu}
\begin{abstract}
It is shown in \cite{lm} that, using only tools of elementary geometry, the classical Routh's theorem for triangles can be fully extended to tetrahedra.
In this article we first give another proof of Routh's theorem for tetrahedra where methods of elementary geometry are combined with the inclusion-exclusion principle.
Then we generalize this approach to $(n-1)-$dimensional simplices. 
A comparison with the formula obtained using vector analysis yields an interesting algebraic identity.
\end{abstract}
\keywords{Routh's theorem, inclusion-exclusion principle, tetrahedra, \\ $(n-1)-$dimensional simplices}
\subjclass[2010]{97G30}

\maketitle

\section*{Introduction}

The classical Routh's theorem, see rider (vii) on page 33 of \cite{g}\footnote{The authors would like to thank Mark B. Villarino of the University of Costa Rica for bringing this reference to their attention.} or page 82 of \cite{r}, states the following.

\begin{theor}
Let $ABC$ be an arbitrary triangle of area $1$, a point $K$ lie on the line segment $BC$, 
a point $L$ lie on the line segment $AC$ and a point $M$ lie on the line segment $AB$ 
such that $\frac{|AM|}{|MB|}=x$, $\frac{|BK|}{|KC|}=y$ and $\frac{|CL|}{|LA|}=z$. 
Denote by $P$ the point of intersection of lines $AK$ and $CM$, by $Q$ the point of intersection 
of lines $BL$ and $AK$, and by $R$ the point of intersection of lines $CM$ and $BL$ - see Figure \ref{F1}.
\begin{figure}[h]\centering
\includegraphics[height=2.5in]{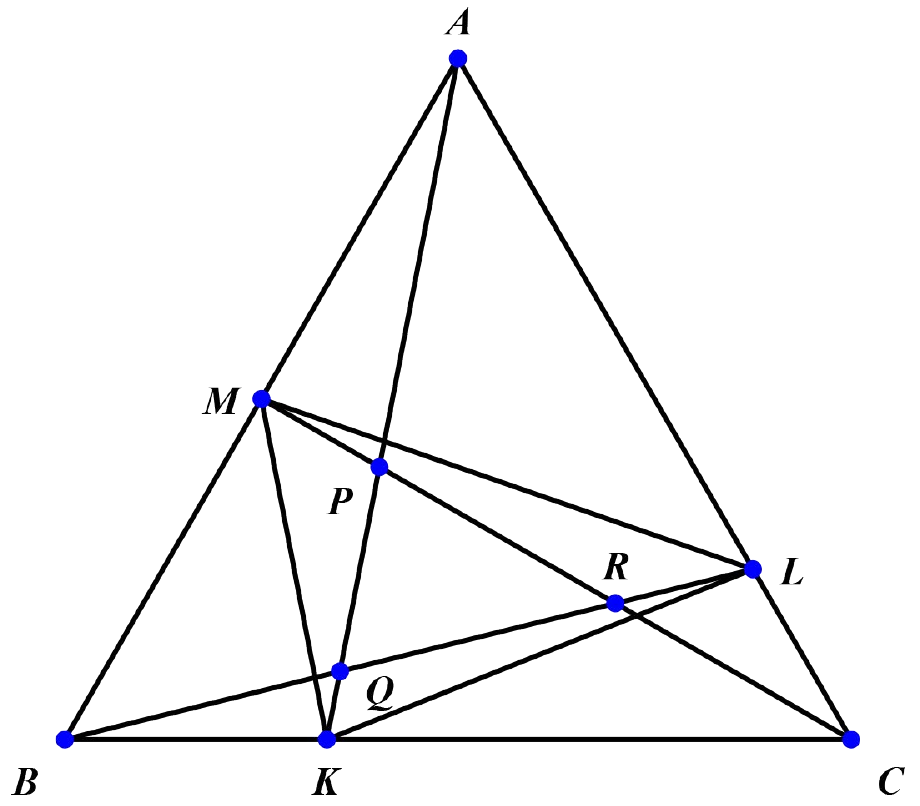}
\caption{Routh's Triangles}
\label{F1}
\end{figure}

Then the area of the triangle $KLM$ is 
$$
\frac{1+xyz}{(1+x)(1+y)(1+z)},
$$
and the area of the triangle 
$PQR$ is \[\frac{(1-xyz)^2}{(1+x+xy)(1+y+yz)(1+z+zx)}.\]
\end{theor}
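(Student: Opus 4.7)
The plan is to prove the two assertions separately, handling $\triangle KLM$ by a direct area-ratio decomposition and $\triangle PQR$ via Menelaus's theorem combined with the inclusion-exclusion principle.

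For $[\triangle KLM]$, I would decompose $\triangle ABC$ into $\triangle KLM$ and the three corner triangles $\triangle AML$, $\triangle BKM$, $\triangle CLK$. Each corner triangle shares one vertex with $\triangle ABC$ and has its other two vertices on the adjacent sides, so the familiar identity $[\triangle AXY]/[\triangle ABC]=(AX/AB)(AY/AC)$ gives $[\triangle AML]=\frac{x}{(1+x)(1+z)}$ together with two cyclic analogs. Summing them, subtracting from $1$, and invoking the expansion $(1+x)(1+y)(1+z)=1+(x+y+z)+(xy+yz+zx)+xyz$ immediately yields $[\triangle KLM]=\frac{1+xyz}{(1+x)(1+y)(1+z)}$.

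For $[\triangle PQR]$ the first step is to locate each intersection point along its cevian. Menelaus's theorem applied to $\triangle ABK$ with the transversal through $C$, $M$, $P$ gives $\frac{AM}{MB}\cdot\frac{BC}{CK}\cdot\frac{KP}{PA}=1$, hence $\frac{AP}{PK}=x(1+y)$; applied to $\triangle ACK$ with the transversal through $B$, $L$, $Q$ it gives $\frac{AQ}{QK}=\frac{1+y}{yz}$, with cyclic analogs for the ratios along $BL$ and $CM$. The second step is inclusion-exclusion. Assuming first that $xyz>1$, I take $T_A=\triangle ABK$, $T_B=\triangle BCL$, $T_C=\triangle CAM$, of areas $\frac{y}{1+y}$, $\frac{z}{1+z}$, $\frac{x}{1+x}$. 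A direct inspection of how the three cevians partition $\triangle ABC$ shows that $T_A\cup T_B\cup T_C=\triangle ABC$ and $T_A\cap T_B\cap T_C=\triangle PQR$, while each pairwise intersection is itself a small triangle such as $T_A\cap T_B=\triangle BKQ$, whose area $\frac{y}{1+y}\cdot\frac{KQ}{KA}=\frac{y^2z}{(1+y)(1+y+yz)}$ follows from the Menelaus ratios. Inclusion-exclusion then gives
\[
[\triangle PQR]=1-\sum_{\mathrm{cyc}}\frac{x}{1+x}+\sum_{\mathrm{cyc}}\frac{x^2 y}{(1+x)(1+x+xy)}.
\]
The complementary case $xyz<1$ is handled by replacing each $T_i$ by the triangle on the opposite side of its cevian ($\triangle ACK$ in place of $\triangle ABK$, and cyclically), which leads to the same closed-form expression.

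The principal obstacle is the concluding algebraic simplification: the right-hand side above must collapse to $\frac{(1-xyz)^2}{(1+x+xy)(1+y+yz)(1+z+zx)}$. Clearing denominators and expanding is routine but lengthy. I expect the factor $(1-xyz)^2$ to emerge because one copy of $1-xyz$ is already implicit in the Menelaus-derived length $PQ$ along $AK$, and the second copy appears only after regrouping the cross-terms by powers of $xyz$, in the same spirit that $(1+x)(1+y)(1+z)-(x+y+z+xy+yz+zx)=1+xyz$ produced the numerator of $[\triangle KLM]$.
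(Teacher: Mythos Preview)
The paper does not give its own proof of this classical statement; it merely cites \cite{g} and \cite{r} and then proceeds to generalize the result to tetrahedra and simplices. So there is no ``paper's proof'' to compare against directly.

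That said, your strategy is correct and is precisely the $n=3$ instance of the method the paper develops in Sections~1 and~2 for higher dimensions. Your corner-triangle decomposition for $[\triangle KLM]$ is standard and works as stated. For $[\triangle PQR]$, your choice of half-triangles $T_A,T_B,T_C$, the identification of the pairwise intersections such as $T_A\cap T_B=\triangle BKQ$, and the inclusion--exclusion identity
\[
[\triangle PQR]=1-\sum_{\mathrm{cyc}}\frac{x}{1+x}+\sum_{\mathrm{cyc}}\frac{x^2y}{(1+x)(1+x+xy)}
\]
are exactly the $n=3$ specialization of the paper's Propositions~\ref{seq} and~\ref{independence} and Theorem~\ref{veta}. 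The only cosmetic difference is that where you invoke Menelaus to obtain $\frac{AP}{PK}=x(1+y)$ and $\frac{KQ}{KA}=\frac{yz}{1+y+yz}$, the paper packages the same computations as Lemmas~\ref{lm35} and~\ref{lm1} (restated from \cite{lm}); the content is identical. Your handling of the case split $xyz\gtrless 1$ by swapping each $T_i$ for the triangle on the other side of its cevian also matches the paper's device of reversing the orientation of the cycle. The closing algebraic identity you flag as the ``principal obstacle'' is the $n=3$ case of the paper's identity~(\ref{e2}), which the paper likewise does not expand by hand but verifies (or defers to software) as a polynomial identity.
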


Routh's theorem implies the theorem of Ceva:

\begin{theor}\label{Ceva}
The lines $AK$, $BL$ and $CM$ intersect at one point if and only if $xyz=1$.
\end{theor}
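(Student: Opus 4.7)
The plan is to derive Ceva's theorem directly from the area formula for the inner triangle $PQR$ stated in Routh's theorem. The key observation is that the three cevians $AK$, $BL$, $CM$ are concurrent if and only if the three pairwise intersection points $P$, $Q$, $R$ collapse to a single point, which happens exactly when the area of triangle $PQR$ is zero. Once this equivalence is established, Ceva's theorem follows from the formula
\[
\mathrm{area}(PQR)=\frac{(1-xyz)^2}{(1+x+xy)(1+y+yz)(1+z+zx)},
\]
because the denominator is strictly positive for positive ratios $x,y,z$, so the area vanishes precisely when $(1-xyz)^2=0$, i.e.\ when $xyz=1$.

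Concretely, I would proceed in two short steps. First, I would verify the direction "concurrent $\Rightarrow$ area zero": if $AK$, $BL$, $CM$ all pass through a common point $X$, then by definition $P=AK\cap CM=X$, $Q=AK\cap BL=X$, and $R=BL\cap CM=X$, so $PQR$ is a degenerate triangle of area $0$. Second, I would verify "area zero $\Rightarrow$ concurrent": if the area is $0$, the points $P,Q,R$ are collinear. Since $P$ and $Q$ both lie on line $AK$, either $P=Q$, or the collinearity forces $R$ to lie on $AK$ as well; in either case, combining with $R\in BL\cap CM$, all three cevians share the point $R$ (which equals $P=Q$), proving concurrency.

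With the geometric equivalence in hand, the algebraic step is immediate: Routh's theorem gives the area as the displayed rational function of $x,y,z$; since $x,y,z>0$ makes every factor $1+x+xy$, $1+y+yz$, $1+z+zx$ strictly positive, the numerator $(1-xyz)^2$ is the only thing that can vanish, and it does so exactly when $xyz=1$.

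The only part that requires any care is the geometric equivalence in the second step above, since \emph{a priori} zero area only means collinearity of $P,Q,R$, not coincidence. The argument that collinearity already forces coincidence (and hence concurrency of the cevians), using the fact that pairs among $\{P,Q,R\}$ already lie on the given cevian lines, is the only nontrivial obstacle in this short deduction.
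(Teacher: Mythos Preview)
Your proposal is correct and follows exactly the approach the paper indicates: the paper does not give a detailed proof of Ceva's theorem but simply remarks that ``Routh's theorem implies the theorem of Ceva,'' and your argument makes this implication explicit by reading off the vanishing of the area of $PQR$ from the Routh formula. Your handling of the one subtle point---that zero area forces $P=Q=R$ rather than merely collinearity, because $P,Q\in AK$ already pins down the line---is sound and completes the deduction the paper leaves implicit.
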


Routh's theorem is also closely related to the following theorem of Menelaus.

\begin{theor}\label{Menelaus}
Let $K$ be an arbitrary point on the line $BC$, $L$ on line $BC$ and $M$ on line $AB$. Denote 
$\frac{\ov{AM}}{\ov{MB}}=x$, $\frac{\ov{BK}}{\ov{KC}}=y$ and $\frac{\ov{CL}}{\ov{LA}}=z$.
Then the points $K,L,M$ are colinear if and only if $xyz=-1$.  
\end{theor}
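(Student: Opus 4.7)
The plan is to prove Menelaus by the classical signed-distance argument, which converts the three ratios along the sides of the triangle into ratios of signed perpendicular distances from the vertices to a common line. This approach fits the elementary-geometry spirit of the paper and keeps track of orientations through a single sign convention.

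For the forward direction, assume $K,L,M$ lie on a line $\ell$. Fix an orientation of $\ell$ and let $d_A,d_B,d_C$ denote the signed perpendicular distances from $A,B,C$ to $\ell$, so that vertices on opposite sides of $\ell$ receive opposite signs. By comparing the two similar right triangles obtained by dropping perpendiculars from $A$ and $B$ onto $\ell$, and observing that $M$ lies between $A$ and $B$ exactly when $A$ and $B$ are on opposite sides of $\ell$, one obtains
\[
\frac{\ov{AM}}{\ov{MB}}=-\frac{d_A}{d_B},\qquad \frac{\ov{BK}}{\ov{KC}}=-\frac{d_B}{d_C},\qquad \frac{\ov{CL}}{\ov{LA}}=-\frac{d_C}{d_A}.
\]
Multiplying the three equalities telescopes to $xyz=-1$.

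For the converse, assume $xyz=-1$ (and interpret the statement with $L$ on line $CA$, the only reading that makes the ratio $z$ meaningful). I would first rule out the degenerate case $KL\parallel AB$: the intercept theorem applied on $\triangle ABC$ gives $KL\parallel AB$ iff $yz=1$, and then $xyz=-1$ would force $x=-1$, which is impossible for a finite point $M\ne A,B$ since $\ov{AM}=-\ov{MB}$ would imply $A=B$. Hence line $KL$ meets line $AB$ at a unique point $M'$, and the forward direction applied to the collinear triple $K,L,M'$ gives $(\ov{AM'}/\ov{M'B})\cdot y\cdot z=-1$. Combined with $xyz=-1$ this yields $\ov{AM'}/\ov{M'B}=x$, and since the map $N\mapsto\ov{AN}/\ov{NB}$ is injective on line $AB$, we conclude $M'=M$, so $M\in\ell$.

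The main obstacle I anticipate is the sign bookkeeping in the similar-triangles step: one must verify, via a fixed orientation convention on $\ell$, that $\ov{AM}/\ov{MB}$ and $-d_A/d_B$ agree in sign in every configuration (i.e.\ whether or not $\ell$ crosses the interior of segment $AB$, and likewise for the other two sides). Once this convention is installed once and for all, the rest of the argument is routine algebra together with the injectivity of the ratio map used to identify $M'$ with $M$.
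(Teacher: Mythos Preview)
The paper does not actually prove this statement: Theorem~\ref{Menelaus} is quoted in the Introduction as classical background, immediately followed by the literature discussion, with no proof environment attached. So there is no ``paper's own proof'' to compare against.

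Your argument itself is the standard and correct one. The forward direction via signed perpendicular distances $d_A,d_B,d_C$ to the transversal $\ell$ is clean, and the telescoping product is exactly right. For the converse, your strategy of intersecting line $KL$ with line $AB$ at $M'$, applying the forward direction to $K,L,M'$, and then using injectivity of $N\mapsto \ov{AN}/\ov{NB}$ to force $M'=M$ is the classical route and is sound. Your handling of the degenerate case $KL\parallel AB$ is also correct: $yz=1$ together with $xyz=-1$ would give $x=-1$, i.e.\ $\ov{AM}=-\ov{MB}$, hence $\ov{AB}=\ov{AM}+\ov{MB}=0$, contradicting $A\neq B$. You are also right to flag the evident typo in the statement (``$L$ on line $BC$'' should read ``$L$ on line $CA$''), without which the ratio $z=\ov{CL}/\ov{LA}$ makes no sense.

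The only place to tighten is exactly the one you already identify: the sign check in $\ov{AM}/\ov{MB}=-d_A/d_B$. A one-line way to make this airtight, avoiding case analysis, is to choose coordinates so that $\ell$ is the $x$-axis; then $d_P$ is simply the $y$-coordinate of $P$, and writing $M=(1-t)A+tB$ for the unique $t$ with $M\in\ell$ gives $(1-t)d_A+t\,d_B=0$, whence $\ov{AM}/\ov{MB}=t/(1-t)=-d_A/d_B$ in all configurations. With that in place your proof is complete.
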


We have conducted an extensive search of the literature on theorems of Routh, Ceva and Menelaus 
and their generalizations to higher dimensions within the context of Euclidean geometry (there are generalizations in other 
	geometries but we did not include them here) which resulted in the bibliography of the present paper. We believe that this list of articles is interesting from the historical perspective (although we cannot guarantee its completeness) and is valuable since it represents the wide range of generalizations of these classical theorems. 
We have been able to find only two papers, \cite{k} and \cite{yq}, where Routh's theorem is generalized to higher dimensions. 
Note that the statement in \cite{yq} is missing an absolute value
and the statement in \cite{k} needs to be reformulated to fit our notation. Unfortunately, both papers are not readily accessible to most of the readers since they are written in Chinese and Slovak languages, respectively. In \cite{lm}, we gave a geometric proof of Routh's theorem for tetrahedra. 
Keeping in mind that we would like to generalize this theorem to simplices, we need to adjust the notation as follows. 

\begin{theor}\label{tetra-new}
Let $A_1A_2A_3A_4$ be an arbitrary tetrahedron of volume $1$. Choose a point $P_1$ on the edge $A_1A_2$, a point $P_2$ on the edge $A_2A_3$, 
a point $P_3$ on the edge $A_3A_4$, and a point $P_4$ on the edge $A_4A_1$ such that $\frac{|P_1A_1|}{|P_1A_2|}=x_1$, 
$\frac{|P_2A_2|}{|P_2A_3|}=x_2$, $\frac{|P_3A_3|}{|P_3A_4|}=x_3$, and $\frac{|P_4A_4|}{|P_4A_1|}=x_4$. 
Then
\begin{equation}\label{e-2}
V_{P_1P_2P_3P_4}=\frac{|1-x_1x_2x_3x_4|}{(1+x_1)(1+x_2)(1+x_3)(1+x_4)}.
\end{equation}

The four planes given by the points $A_1, A_2, P_3$, points $A_2,A_3,P_4$, 
points $A_3,A_4,P_1$, and points $A_4,A_1,P_2$ enclose the tetrahedron $R_1R_2R_3R_4$ (see Figure \ref{F2}) of the volume

\begin{equation}\label{e-1}
\begin{split}
V_{R_1R_2R_3R_4}=\frac{|1-x_1x_2x_3x_4|^3}{(1+x_1+x_1x_2+x_1x_2x_3)(1+x_2+x_2x_3+x_2x_3x_4)} \times \\
\frac 1{(1+x_3+x_3x_4+x_3x_4x_1)(1+x_4+x_4x_1+x_4x_1x_2)}.
\end{split}
\end{equation}
\end{theor}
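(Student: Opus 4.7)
The plan is to express the relevant points in barycentric coordinates with respect to $A_1A_2A_3A_4$, compute the volumes of auxiliary sub-tetrahedra via the basic scaling rule (\emph{if one vertex of a tetrahedron is moved along an adjacent edge to the point at fraction $t$ of the edge length, the volume is scaled by $t$}), and then assemble these via inclusion--exclusion, in the spirit of \cite{lm}.

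For formula (\ref{e-2}): write $P_i=\frac{1}{1+x_i}A_i+\frac{x_i}{1+x_i}A_{i+1}$ (indices mod $4$). One efficient route is to note that $V_{P_1P_2P_3P_4}/V_{A_1A_2A_3A_4}$ is the absolute value of the determinant of the cyclic bi-diagonal $4\times 4$ matrix expressing $P_1,\dots,P_4$ in terms of $A_1,\dots,A_4$, and a single cofactor expansion yields $(1-x_1x_2x_3x_4)/\prod_i(1+x_i)$. For the elementary-geometry route announced in the abstract, build a family of sub-tetrahedra whose volumes follow from the scaling rule --- for example the four sub-tetrahedra $P_{i-1}P_iA_{i+1}P_{i+1}$, each of volume $\frac{x_{i+1}}{(1+x_{i-1})(1+x_i)(1+x_{i+1})}V_{A_1A_2A_3A_4}$, together with the further slices bounded by the face-planes of $P_1P_2P_3P_4$ extended inside $A_1A_2A_3A_4$. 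Track which slices cover which pieces of $A_1A_2A_3A_4\setminus P_1P_2P_3P_4$ and with what multiplicity; inclusion--exclusion then collapses the alternating sum to (\ref{e-2}). The identity $V_{P_1P_2P_3P_4}=0 \iff x_1x_2x_3x_4=1$, a 3D analogue of Menelaus (the $P_i$'s become coplanar precisely in this case), serves as a consistency check throughout.

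For formula (\ref{e-1}): first locate each vertex $R_j$ as a triple intersection of three of the four planes $\alpha_i:=\mathrm{plane}(A_i,A_{i+1},P_{i+2})$. Since $\alpha_i$ contains the whole edge $A_iA_{i+1}$, two consecutive planes $\alpha_i,\alpha_{i+1}$ already meet along a line through $A_{i+1}$, so finding $R_j=\alpha_i\cap\alpha_{i+1}\cap\alpha_{i+2}$ reduces to a planar cross-ratio computation. This will yield barycentric coordinates of $R_j$ whose denominators are precisely the expressions $1+x_i+x_ix_{i+1}+x_ix_{i+1}x_{i+2}$ appearing in (\ref{e-1}). With the $R_j$ in hand, apply either the $4\times 4$ determinant formula or the same scaling/inclusion--exclusion strategy used for (\ref{e-2}) to obtain $V_{R_1R_2R_3R_4}$.

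The main obstacle will be twofold. In Part 1 the bookkeeping of the inclusion--exclusion is delicate: one has to identify the correct family of auxiliary sub-tetrahedra and their multi-fold overlaps so that the alternating sum collapses cleanly to (\ref{e-2}). In Part 2 the challenge is algebraic: extracting the cubic factor $|1-x_1x_2x_3x_4|^3$ from the $4\times 4$ volume determinant requires carefully chosen row operations, and strongly suggests that each of the three row-differences $R_j-R_1$ carries exactly one factor of $1-x_1x_2x_3x_4$. Correct handling of the absolute values in the two regimes $x_1x_2x_3x_4<1$ and $>1$ --- in the second regime the plane through $P_1P_2P_3$ crosses edge $A_1A_4$ past $P_4$, so the combinatorial layout of the pieces flips --- is the final bookkeeping item.
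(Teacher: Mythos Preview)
Your submission is a plan rather than a proof: you state two possible routes, compute almost nothing, and close by listing the obstacles you would still need to overcome. That said, the two routes deserve separate comments.

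\medskip
\textbf{The determinant route.} Your barycentric/determinant computation for (\ref{e-2}) is correct, and the analogous plan for (\ref{e-1}) --- find the $R_j$ in barycentric coordinates, observe that the denominators are $1+x_i+x_ix_{i+1}+x_ix_{i+1}x_{i+2}$, then take a $4\times4$ determinant --- would also go through. But this is precisely the vector-analysis approach the paper attributes to \cite{yq} and deliberately sets aside; the point of the present paper is to reach (\ref{e-1}) by elementary geometry plus inclusion--exclusion, and then to \emph{compare} the two answers to obtain the algebraic identity of Theorem~\ref{identity}. So this route is valid but is exactly the foil, not the paper's proof.

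\medskip
\textbf{The elementary/inclusion--exclusion route.} Here there is a real gap: you have not identified a workable family of pieces. Your suggested pieces $P_{i-1}P_iA_{i+1}P_{i+1}$ and unspecified ``further slices'' are aimed at (\ref{e-2}), not (\ref{e-1}), and in any case you never write down the overlaps or the alternating sum. The paper's proof of (\ref{e-1}) rests on one clean choice you did not make: for each plane $\sigma_i$ (through the two $A_j$ with $j\ne i,i+1$ and the point $P_i$) take the half-space $S_i$ containing $A_i$, and set $T_i=S_i\cap A_1A_2A_3A_4$. Then, for $x_1x_2x_3x_4>1$, one has $R_1R_2R_3R_4=\bigcap_{i=1}^4 T_i$, so inclusion--exclusion on the complements gives
\[
V_{R_1R_2R_3R_4}=1-\sum_i V_{T_i}+\sum_{i<j}V_{T_i\cap T_j}-\sum_{i<j<k}V_{T_i\cap T_j\cap T_k}.
\]
Each $T_i$, $T_i\cap T_j$, $T_i\cap T_j\cap T_k$ is again a tetrahedron whose volume is obtained from $1$ by a short chain of edge-ratio scalings (Lemmas~\ref{lm0}--\ref{lm3}); plugging these fourteen explicit rational functions into the line above and verifying the resulting identity (\ref{ie}) yields (\ref{e-1}). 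The case $x_1x_2x_3x_4<1$ is reduced to the first by reversing the cycle. None of this structure --- the half-space tetrahedra $T_i$, the fact that all partial intersections are themselves tetrahedra with computable edge ratios, or the final identity --- appears in your sketch, and without it the ``bookkeeping'' you flag as an obstacle cannot even be started.

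\medskip
For (\ref{e-2}) the paper simply cites \cite{lm}; your cofactor expansion is a legitimate alternative proof of that part.
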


\begin{figure}[h]\centering
\includegraphics[height=3.5in]{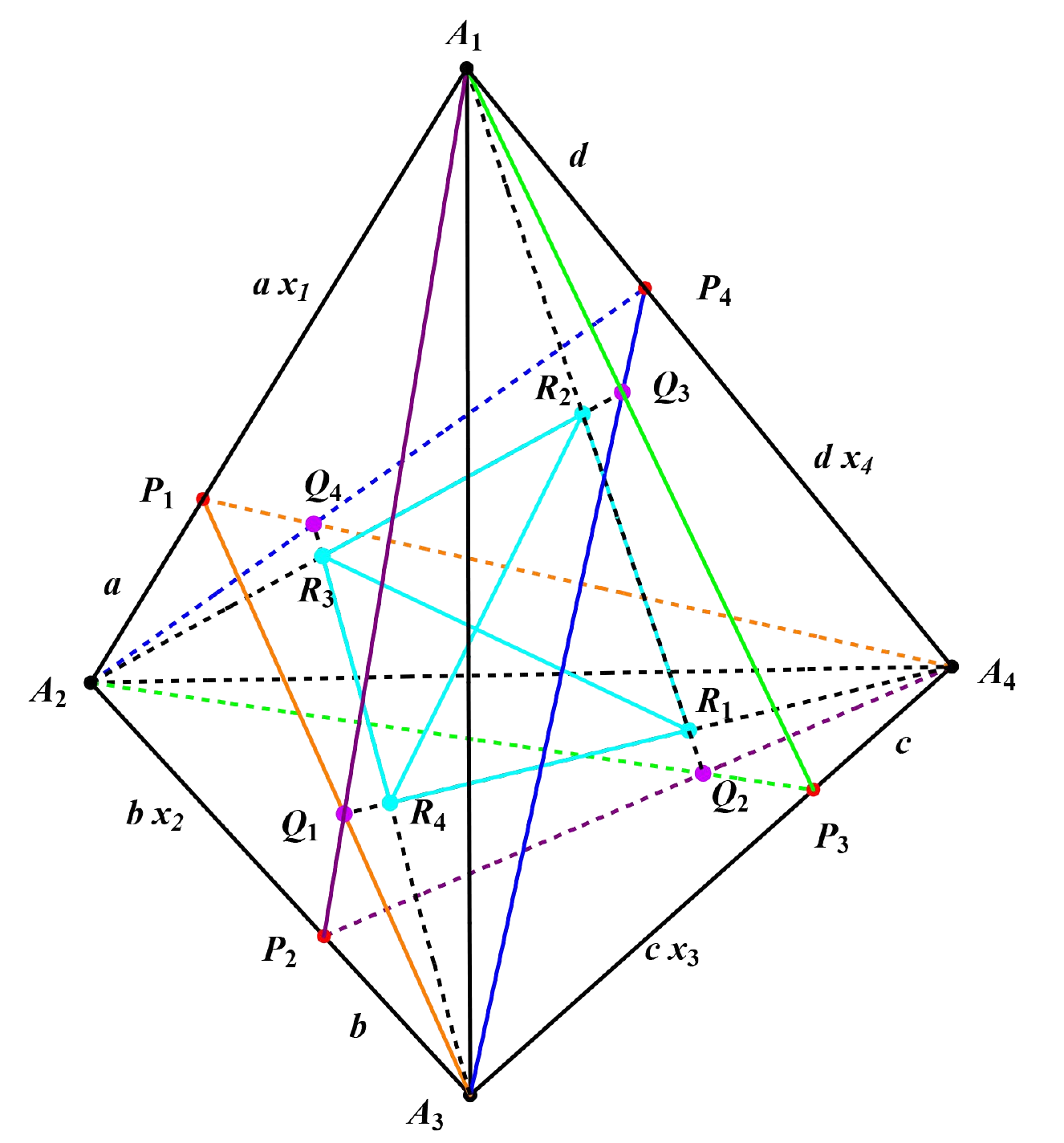}
\caption{Notation, $x_1x_2x_3x_4>1$}
\label{F2}
\end{figure}

The formulas in the above theorem correspond to the cycle $(A_1A_2A_3A_4)$. Opposite to \cite{lm}, we will assume that $x_1x_2x_3x_4>1$.
If $x_1x_2x_3x_4<1$, then we can change the orientation of the cycle $(A_1A_2A_3A_4)$. As a consequence, the product 
$x_1x_2x_3x_4$ will change to $\frac{1}{x_1x_2x_3x_4}>1$, and a simple evaluation leads to the same reesult. 

Formula (\ref{e-1}) will be proved using geometric considerations together with the principle of inclusion-exclusion. 

In Section 2 of the article we will extend our considerations to the cycle $(A_1\ldots A_n)$ corresponding to a general 
$(n-1)-$dimensional simplex $A_1 \ldots A_n$. Comparison of the result with the formula given in \cite{yq} yields a remarkable algebraic identity (\ref{e2}).


\section{Routh's theorem for tetrahedra: proof of (\ref{e-1})}
Let us assume that $x_1x_2x_3x_4 >1$.
To the cutting plane $\sigma_1$ given by points $A_3, A_4, P_1$ we assign the half-space $S_1$ containing $A_1$, 
to the cutting plane $\sigma_2$ given by points $A_1, A_4, P_2$ we assign the half-space $S_2$ containing $A_2$, 
to the cutting plane $\sigma_3$ given by points $A_1, A_2, P_3$ we assign the half-space $S_3$ containing $A_3$, and 
to the cutting plane $\sigma_4$ given by points $A_2, A_3, P_4$ we assign the half-space $S_4$ containing $A_4$.
For $i=1,2,3,4,$ denote by $T_i$ the tetrahedron that is the intersection of $S_i$ with the tetrahedron $A_1A_2A_3A_4$, and by $V_i$ the volume of the tetrahedron $T_i$.

If $x_1x_2x_3x_4>1$, then the intersection $T_1\cap T_2\cap T_3\cap T_4=S_1\cap S_2\cap S_3 \cap S_4$ is the tetrahedron $R_1R_2R_3R_4$. (If $x_1x_2x_3x_4=1$, then this intersection is a single point and if $x_1x_2x_3x_4<1$, this intersection if empty).

In what follows we denote the volume of a tetrahedron $T=ABCD$ by $V_{ABCD}$ or $V_T$ and analogously for other tetrahedra. 
For the convenience of the reader we shall now restate Lemmas 6, 7, and 8 of \cite{lm}. 

\begin{lm}\label{lm0}
In the notation of Figure \ref{fig5},
$$
V_{AMCD}=V_{ABCD}\frac{|AM|}{|AB|}.
$$
\end{lm}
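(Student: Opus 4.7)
The plan is to use the classical formula that the volume of a tetrahedron equals one third of a base area times the corresponding height, together with the fact that the two tetrahedra $AMCD$ and $ABCD$ share the common face $ACD$. Because of this shared face, their volumes are in the same ratio as the distances from the respective opposite vertices $M$ and $B$ to the plane $\pi$ containing $A$, $C$, and $D$.

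Next I would observe that $M$ lies on the segment $AB$ (this is what Figure \ref{fig5} is meant to convey), and that $A$ itself lies on the plane $\pi$. Consequently the distance from any point $X$ on the line $AB$ to $\pi$ is a linear function of the parameter along $AB$ that vanishes at $A$; in particular, if $M$ divides $AB$ so that $|AM|/|AB|=t$, then $\operatorname{dist}(M,\pi)=t\cdot\operatorname{dist}(B,\pi)$. Combining this with the previous paragraph gives
\[
\frac{V_{AMCD}}{V_{ABCD}}=\frac{\operatorname{dist}(M,\pi)}{\operatorname{dist}(B,\pi)}=\frac{|AM|}{|AB|},
\]
which is exactly the claimed identity.

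Since the statement is so elementary, there is really no main obstacle: the only thing to be careful about is to invoke the correct shared face ($ACD$, not any of the others) so that $A$ actually lies in the plane used to measure the heights, which is what makes the distances scale linearly with $|AM|/|AB|$ from $A$. An alternative, essentially equivalent, route would be to place coordinates with $A$ at the origin and use the determinant formula $V=\tfrac{1}{6}|\det(\vec{AB},\vec{AC},\vec{AD})|$: replacing $\vec{AB}$ by $\vec{AM}=t\vec{AB}$ pulls the factor $t=|AM|/|AB|$ out of the determinant, giving the same conclusion.
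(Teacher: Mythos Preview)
Your argument is correct and is the standard elementary proof of this fact. Note that the paper itself does not actually prove Lemma~\ref{lm0}: it merely restates it (together with Lemmas~\ref{lm15} and~\ref{lm35}) from \cite{lm}, so there is no ``paper's own proof'' to compare against. Your shared-face-plus-heights argument, and the alternative determinant argument you sketch, are both entirely adequate and are exactly the kinds of proofs one would expect for this lemma.
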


\begin{lm}\label{lm15} In the notation of Figure \ref{fig5}, 
\begin{figure}[ht]\centering
\includegraphics[height=2.5in]{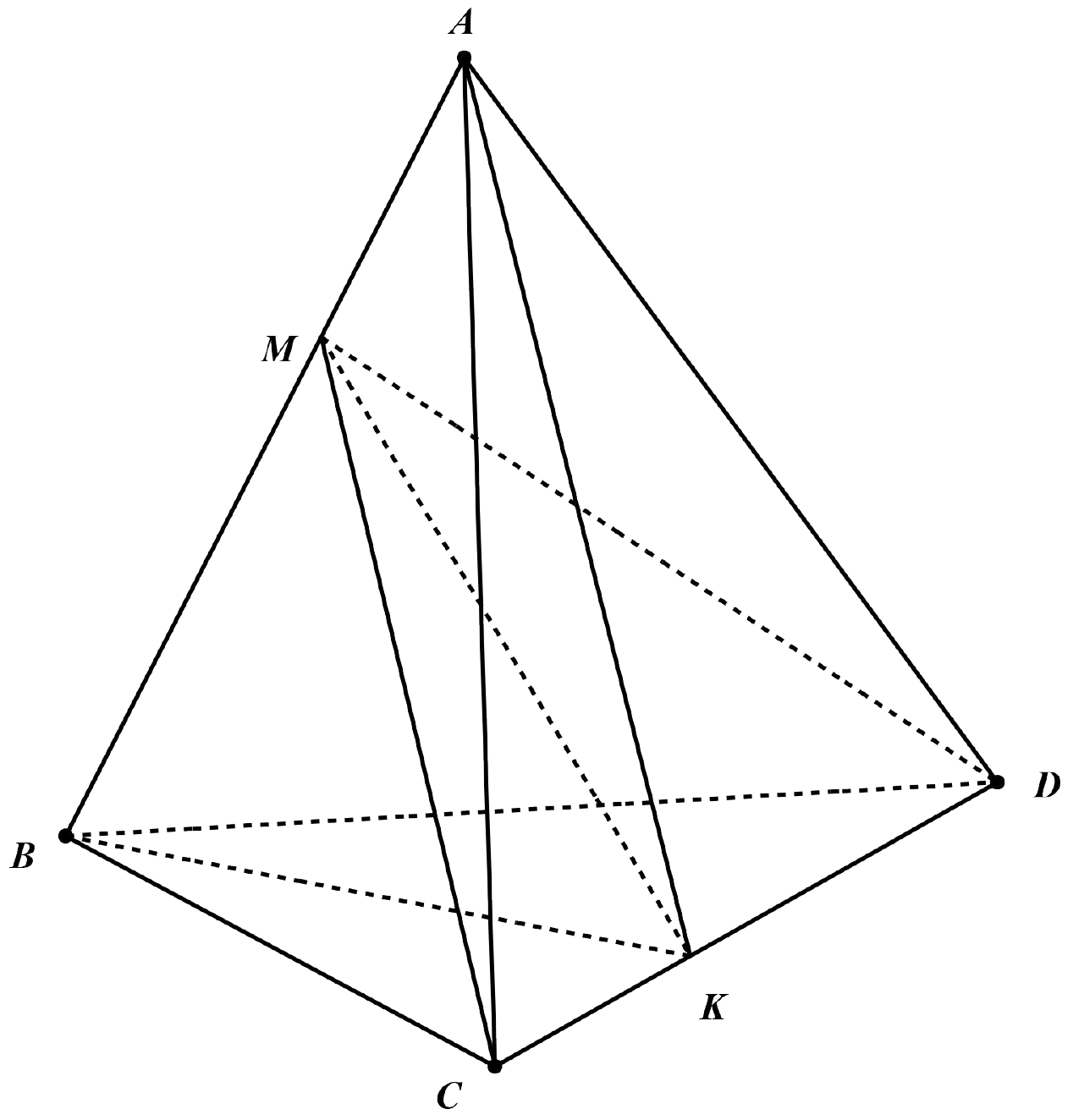}
\caption{Tetrahedra AMCD, ACKM, ADKM}
\label{fig5}
\end{figure}

\[V_{AKCM}=V_{ABCD}\frac {|AM|}{|AB|}\frac{|CK|}{|CD|} \text{ \ and \ } 
V_{AMKD}=V_{ABCD}\frac{|AM|}{|AB|}\frac{|DK|}{|DC|}. \]
\end{lm}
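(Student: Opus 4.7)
The plan is to prove both identities by applying Lemma~\ref{lm0} twice, using the fact that the volume of a tetrahedron is independent of the labeling of its vertices.

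First, I would apply Lemma~\ref{lm0} to the tetrahedron $ABCD$ with the point $M$ on the edge $AB$, which immediately yields
\[
V_{AMCD} = V_{ABCD}\cdot\frac{|AM|}{|AB|}.
\]
This eliminates the vertex $B$ from the picture and reduces the problem to computing, inside the tetrahedron $AMCD$, the volume of the sub-tetrahedra obtained by replacing the vertex $C$ (respectively $D$) by the point $K$ on the edge $CD$.

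Next, I would re-apply Lemma~\ref{lm0} to the tetrahedron $AMCD$, now treating the edge $CD$ as the edge on which the cut is made. Relabeling $C$ and $D$ as the ``$A$'' and ``$B$'' of Lemma~\ref{lm0}, and the remaining two vertices $A,M$ as its ``$C,D$'', the point $K \in CD$ produces
\[
V_{AMCK} = V_{AMCD}\cdot\frac{|CK|}{|CD|}, \qquad V_{AMKD} = V_{AMCD}\cdot\frac{|DK|}{|DC|}.
\]
Substituting the expression for $V_{AMCD}$ from the first step, and noting that $V_{AKCM}=V_{AMCK}$ because tetrahedron volume is symmetric in its vertices, yields the two claimed formulas.

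The only thing requiring care is bookkeeping: one must verify that Lemma~\ref{lm0} indeed applies after the relabeling, i.e.\ that the role of the ``base'' face opposite the moving vertex can be played by $AMC$ or $AMD$ inside $AMCD$. This is automatic since the statement of Lemma~\ref{lm0} depends only on which edge carries the cut and not on any particular naming of the vertices. No further geometric input, and in particular no appeal to coordinates, is required.
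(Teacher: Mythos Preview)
Your proof is correct. The paper does not actually prove this lemma here; it merely restates it (together with Lemmas~\ref{lm0} and~\ref{lm35}) from the authors' earlier paper~\cite{lm}. Your two-step application of Lemma~\ref{lm0}---first cutting along $AB$ at $M$, then cutting the resulting tetrahedron $AMCD$ along $CD$ at $K$---is exactly the standard argument and is what any reader would supply.
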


\begin{lm}\label{lm35}
Consider the triangle $ABC$ in Figure \ref{F3}. 
\begin{figure}[ht]\centering
\includegraphics[height=2.5in]{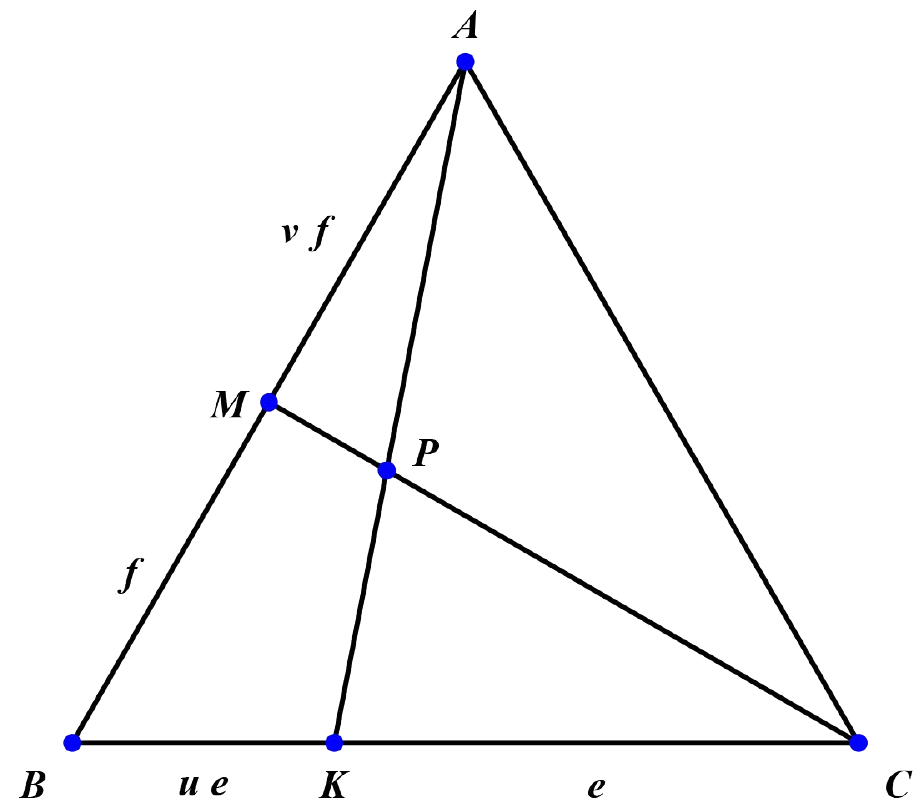}
\caption{Ratios}
\label{F3}
\end{figure}
If $\frac{|AM|}{|MB|}=v$ and $\frac{|BK|}{|KC|}=u$, then \[\frac{|AP|}{|PK|}=v(1+u).\]
\end{lm}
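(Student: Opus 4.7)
My plan is to deduce the identity directly from the theorem of Menelaus (Theorem~\ref{Menelaus}), which is already available in the excerpt. The point $P$ in Figure~\ref{F3} is the intersection of the cevians $AK$ and $CM$ of triangle $ABC$, so $M$, $P$, $C$ are collinear. I would apply Menelaus to the auxiliary triangle $ABK$ cut by the transversal line through $M$, $P$, $C$: this line meets the side $AB$ at the interior point $M$, the side $AK$ at the interior point $P$, and the extension of the side $BK$ beyond $K$ at the point $C$ (since $K$ lies between $B$ and $C$ on the original side $BC$). In absolute-value form, Menelaus yields
$$
\frac{|AM|}{|MB|}\cdot\frac{|BC|}{|CK|}\cdot\frac{|KP|}{|PA|}=1.
$$
Substituting $|AM|/|MB|=v$ and $|BC|/|CK|=(|BK|+|KC|)/|KC|=1+u$ and solving for $|AP|/|PK|$ produces the desired identity $|AP|/|PK|=v(1+u)$.

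A purely synthetic alternative, more in line with Lemmas~\ref{lm0} and~\ref{lm15}, runs as follows. Draw through $M$ the line parallel to $BC$ and let $N$ denote its intersection with $AK$. Then $\triangle AMN\sim\triangle ABK$ with similarity ratio $|AM|/|AB|=v/(1+v)$, so $|AN|=v|AK|/(1+v)$ and $|MN|=v|BK|/(1+v)$. Since $MN\parallel KC$, the vertical angles at $P$ give $\triangle MNP\sim\triangle CKP$, whence
$$
\frac{|NP|}{|PK|}=\frac{|MN|}{|KC|}=\frac{uv}{1+v}.
$$
A brief inspection of the configuration shows that $N$ lies between $A$ and $P$ on $AK$, so $|AP|=|AN|+|NP|$. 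Combining this with $|AP|+|PK|=|AK|$ and the two displayed proportionalities, a one-line elimination produces $|AP|/|PK|=v(1+u)$.

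The only point that requires any care is verifying the relative position of the labelled points on the line $AK$ (equivalently, pinning down the sign convention in Menelaus so that the absolute-value form is legitimate). Once the orientation in Figure~\ref{F3} is used to fix this, the computation is immediate, and I do not anticipate any substantive obstacle.
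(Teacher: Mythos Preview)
Your argument is correct; both the Menelaus route and the parallel-line route are valid, and the sign/orientation checks you flag are the only places where care is needed.

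There is, however, nothing in the present paper to compare against: the paper does not prove this lemma here. It is explicitly introduced as a restatement of a result from the companion article~\cite{lm} (``For the convenience of the reader we shall now restate Lemmas 6, 7, and 8 of \cite{lm}''), and no proof is supplied in this text. So your proposal supplies an argument where the paper simply quotes one. If you wish to match the spirit of the source, the Menelaus derivation is the more economical choice, since Menelaus is already stated in the introduction; the parallel-line version is closer in flavor to the elementary ratio manipulations of Lemmas~\ref{lm0} and~\ref{lm15}, but either is perfectly adequate.
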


\vskip 5pt
Using Lemma \ref{lm0}, we obtain
$$
V_{T_1}=\frac{x_1}{1+x_1}, \ \ V_{T_2}=\frac{x_2}{1+x_2}, \ \ V_{T_3}=\frac{x_3}{1+x_3}, \ \ V_{T_4}=\frac{x_4}{1+x_4}.
$$
It follows from Lemma \ref{lm15} that 
$$
V_{T_1\cap T_3}=\frac{x_1}{1+x_1}\frac{x_3}{1+x_3} \text { \  and \ }
V_{T_2\cap T_4}=\frac{x_2}{1+x_2}\frac{x_4}{1+x_4}.
$$

\begin{lm}\label{lm1}
Consider the triangle in Figure \ref{F3}.
Then 
$$
\frac{|MP|}{|PC|}=\frac{vu}{1+v} \text { \ and \ } \frac{|MP|}{|MC|}=\frac{vu}{1+v+vu}.
$$
\end{lm}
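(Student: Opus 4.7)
The plan is to compute $|MP|/|PC|$ directly from Menelaus' theorem and then read off $|MP|/|MC|$ by elementary arithmetic. Apply the (unsigned form of) Menelaus' theorem, cited as Theorem \ref{Menelaus}, to the triangle $MBC$ with transversal line $\overleftrightarrow{AK}$. By the definition of $P$ this line passes through $P$, and it meets line $MB$ at $A$ (outside the segment, since $M$ lies between $A$ and $B$), side $BC$ at $K$, and side $CM$ at $P$. The Menelaus relation becomes
\[
\frac{|MA|}{|AB|}\cdot\frac{|BK|}{|KC|}\cdot\frac{|CP|}{|PM|}=1.
\]
Since $|AM|/|MB|=v$ gives $|MA|/|AB|=v/(v+1)$, and $|BK|/|KC|=u$, solving for the last factor yields $|CP|/|PM|=(v+1)/(uv)$, i.e.\ $|MP|/|PC|=uv/(1+v)$, which is the first claimed formula. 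The second formula then follows at once from $|MC|=|MP|+|PC|$:
\[
\frac{|MP|}{|MC|}=\frac{|MP|}{|MP|+|PC|}=\frac{uv/(1+v)}{1+uv/(1+v)}=\frac{uv}{1+v+uv}.
\]

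Two alternatives are worth noting, both equally short. First, one can reapply Lemma \ref{lm35} to the same triangle after relabeling so that $C$ plays the role previously played by $A$ (with the points $M$ and $K$ interchanged). Under this relabeling $v$ is replaced by $|CK|/|KB|=1/u$ and $u$ by $|BM|/|MA|=1/v$, so Lemma \ref{lm35} immediately yields $|CP|/|PM|=(1/u)(1+1/v)=(1+v)/(uv)$. Second, a mass-point argument works cleanly: assign weights $1$, $v$, and $uv$ at $A$, $B$, $C$, respectively, so that $M$ is the balance point on $AB$ and $K$ the balance point on $BC$; then $P$ is the balance point on $CM$, giving $|MP|/|PC|=m_C/(m_A+m_B)=uv/(1+v)$.

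I do not expect any real obstacle, as the lemma is a classical ratio identity essentially dual to Lemma \ref{lm35}. The only delicate point in the Menelaus route is the bookkeeping that $A$ lies outside segment $MB$ while $K$ and $P$ lie inside their respective sides, so that exactly one of the three intersections is external and the unsigned Menelaus product equals $1$ without a sign correction. Since Menelaus has already been introduced in the paper, this is the route I would prefer in the write-up.
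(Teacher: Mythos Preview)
Your proof is correct. The paper's own argument is precisely your first alternative: it applies Lemma~\ref{lm35} to the relabeled triangle $CBA$ (so that $v\mapsto 1/u$ and $u\mapsto 1/v$) to obtain $|CP|/|PM|=(1/u)(1+1/v)=(1+v)/(uv)$, and then reads off both ratios. Your primary route via Menelaus on triangle $MBC$ with transversal $\overleftrightarrow{AK}$ is an equally short and valid alternative; the only trade-off is that it invokes Theorem~\ref{Menelaus} (stated but not proved in the paper) rather than the already-established Lemma~\ref{lm35}, so the paper's choice keeps the argument self-contained within the lemmas it has proved.
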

\begin{proof}
Lemma \ref{lm35} applied to the triangle $CBA$ yields 
$$
\frac{|CP|}{|PM|}=\frac1u \left (1+\frac1v \right )=\frac{1+v}{uv},
$$
and the desired ratios follow.
\end{proof}
\begin{figure}[ht]\centering
\includegraphics[height=3in]{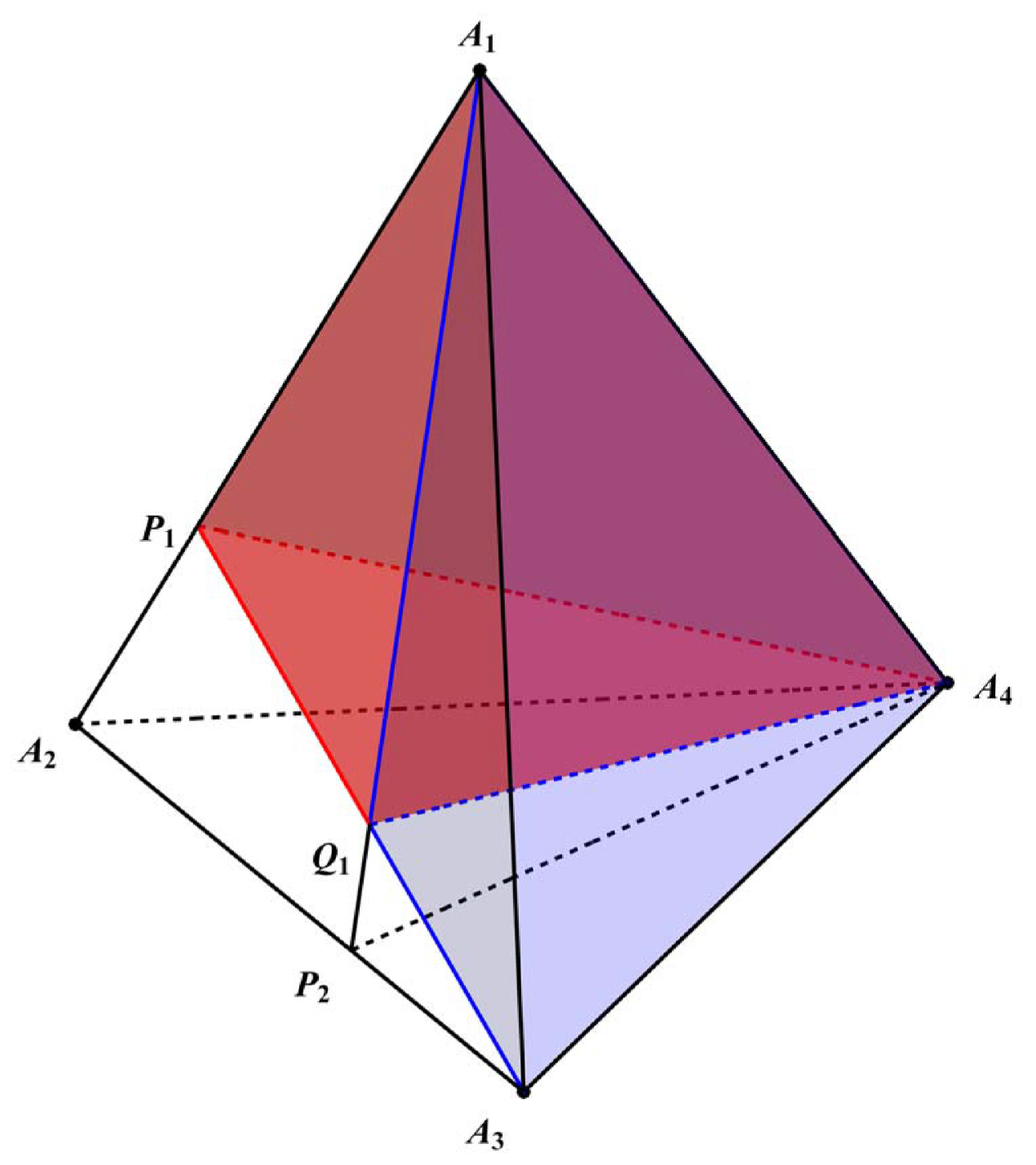}
\caption{Middle tetrahedron}
\label{F5}
\end{figure}
\begin{lm}\label{lm2}
The volume of $T_1\cap T_2$ is \[\frac{x_1^2x_2}{(1+x_1)(1+x_1+x_1x_2)}.\]
\end{lm}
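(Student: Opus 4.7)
The plan is to identify $T_1\cap T_2$ as a concrete sub-tetrahedron of $T_1$ and then compute its volume via a chain of ratios using the lemmas already established. Since $T_1\subseteq S_1$, I have $T_1\cap T_2=T_1\cap S_2$, so the task reduces to cutting the tetrahedron $T_1=A_1A_3A_4P_1$ by the plane $\sigma_2$ and keeping the piece on the side of $A_2$.

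The first step I would take is the observation that $\sigma_2=(A_1A_4P_2)$ already contains the edge $A_1A_4$ of $T_1$. Consequently $\sigma_2$ will meet $T_1$ along a triangle of the form $A_1A_4Q$, where $Q$ is the single point at which $\sigma_2$ crosses the opposite edge $A_3P_1$ of $T_1$. This cross-section splits $T_1$ into the two tetrahedra $A_1A_4A_3Q$ and $A_1A_4QP_1$; since $P_1$ lies strictly between $A_1\in\sigma_2$ and $A_2\in S_2$, we have $P_1\in S_2$ while $A_3\notin S_2$, so
\[
T_1\cap T_2 \;=\; A_1A_4QP_1.
\]

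To locate $Q$, I would reduce the three-dimensional intersection to a planar one. The line $A_3P_1$ already lies in the plane of the face $A_1A_2A_3$, and since both $A_1$ and $P_2$ belong to $\sigma_2\cap(A_1A_2A_3)$, the plane $\sigma_2$ meets this face along the line $A_1P_2$. Therefore $Q$ is nothing other than the intersection inside the triangle $A_1A_2A_3$ of the cevians $A_3P_1$ and $A_1P_2$ --- exactly the configuration of Lemma \ref{lm1} with $v=x_1$ and $u=x_2$. This would yield
\[
\frac{|P_1Q|}{|P_1A_3|}\;=\;\frac{x_1x_2}{1+x_1+x_1x_2}.
\]

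Finally, since the tetrahedra $A_1A_4QP_1$ and $T_1=A_1A_3A_4P_1$ share the face $A_1A_4P_1$ and their fourth vertices $Q$ and $A_3$ are collinear with the base-vertex $P_1$, the ratio of their volumes equals $|P_1Q|/|P_1A_3|$. Multiplying this by the previously computed $V_{T_1}=x_1/(1+x_1)$ would produce the stated value. The only genuine obstacle is the initial geometric observation that $\sigma_2$ contains an entire edge of $T_1$: once that is spotted, the three-dimensional cut collapses to a planar cross-section and a single planar application of Lemma \ref{lm1} finishes the argument.
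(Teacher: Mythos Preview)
Your proof is correct and follows essentially the same route as the paper: both identify $T_1\cap T_2$ as the tetrahedron $A_1P_1Q_1A_4$ (your $Q$ is the paper's $Q_1$), locate $Q_1$ as the intersection of the cevians $A_3P_1$ and $A_1P_2$ in the face $A_1A_2A_3$, apply Lemma~\ref{lm1} with $v=x_1$, $u=x_2$ to obtain $\frac{|P_1Q_1|}{|P_1A_3|}=\frac{x_1x_2}{1+x_1+x_1x_2}$, and multiply by $V_{T_1}=\frac{x_1}{1+x_1}$. The only difference is that where the paper appeals to Figure~\ref{F5} for the identification of the intersection, you spell out explicitly why $\sigma_2$ contains the edge $A_1A_4$ of $T_1$ and therefore meets $T_1$ in a single triangular cross-section.
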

\begin{proof}
It can be observed from Figure \ref{F5} 
that $T_1\cap T_2$ is the tetrahedron $A_1P_1Q_1A_4$.
Using Lemma \ref{lm0}, we obtain
$$
V_{A_1P_1A_3A_4}=\frac{x_1}{1+x_1} \text { \  and \ }  V_{A_1P_1Q_1A_4}=V_{A_1P_1A_3A_4}\frac{|P_1Q_1|}{|Q_1A_3|}.
$$
Since, by Lemma \ref{lm1}, \[\frac{|P_1Q_1|}{|Q_1A_3|}=\frac{x_1x_2}{1+x_1+x_1x_2},\] the formula follows.
\end{proof}

It follows as in the proof of Lemma \ref{lm2} that 
$$
V_{T_2\cap T_3}=\frac{x_2^2x_3}{(1+x_2)(1+x_2+x_2x_3)},
$$
$$
V_{T_3\cap T_4}=\frac{x_3^2x_4}{(1+x_3)(1+x_3+x_3x_4)},
$$
$$
V_{T_4\cap T_1}=\frac{x_4^2x_1}{(1+x_4)(1+x_4+x_4x_1)}.
$$

\begin{lm}\label{lm3}
The volume of $T_1\cap T_2\cap T_3$ is \[\frac{x_1^3x_2^2x_3}{(1+x_1)(1+x_1+x_1x_2)(1+x_1+x_1x_2+x_1x_2x_3)}.\]
\end{lm}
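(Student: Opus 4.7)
The plan is to extend the argument of Lemma \ref{lm2}: I will realize $T_1\cap T_2\cap T_3$ as a tetrahedron sharing three vertices with $T_1\cap T_2=A_1P_1Q_1A_4$, then compute its volume via a single application of Lemma \ref{lm0}. Starting from $T_1\cap T_2=A_1P_1Q_1A_4$, I intersect with the half-space $S_3$, whose bounding plane $\sigma_3$ passes through $A_1, A_2, P_3$ and which contains $A_3$. Among the four vertices: $A_1$ and $P_1$ both lie on $\sigma_3$, because $P_1\in A_1A_2\subset\sigma_3$; $Q_1$ lies strictly inside $S_3$, because it is an interior point of the segment $P_1A_3$ with $P_1\in\sigma_3$ and $A_3\in S_3\setminus\sigma_3$; and $A_4$ lies on the opposite side of $\sigma_3$ from $A_3$, because $P_3$ sits on the edge $A_3A_4$. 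Hence $\sigma_3$ cuts $A_1P_1Q_1A_4$ across the triangle $A_1P_1R_1$, where $R_1$ is the point at which the segment $Q_1A_4$ meets $\sigma_3$, so $T_1\cap T_2\cap T_3=A_1P_1Q_1R_1$. By Lemma \ref{lm0},
\[
V_{A_1P_1Q_1R_1}=V_{A_1P_1Q_1A_4}\cdot\frac{|Q_1R_1|}{|Q_1A_4|}.
\]

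The main step is to compute $|Q_1R_1|/|Q_1A_4|$ by elementary planar means, and the principal obstacle is selecting the right 2D figure. I observe that the line $A_4Q_1$ lies inside $\sigma_1=A_3A_4P_1$, since $A_4\in\sigma_1$ and $Q_1$ lies on the segment $P_1A_3\subset\sigma_1$. The line $\sigma_1\cap\sigma_3$ contains both $P_1$ (which lies in $\sigma_3$ as noted above) and $P_3$ (which lies on $A_3A_4\subset\sigma_1$), so $\sigma_1\cap\sigma_3$ is the line $P_1P_3$. Therefore $R_1$ is the planar intersection of the lines $A_4Q_1$ and $P_1P_3$ inside the triangle $P_1A_3A_4\subset\sigma_1$.

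Finally I apply Lemma \ref{lm1} in the triangle $P_1A_3A_4$ with the identification $A=P_1$, $B=A_3$, $C=A_4$, $M=Q_1$, $K=P_3$, $P=R_1$. The ratios needed are $v=|P_1Q_1|/|Q_1A_3|=x_1x_2/(1+x_1)$, converted from the ratio $|P_1Q_1|/|P_1A_3|=x_1x_2/(1+x_1+x_1x_2)$ that emerged in the proof of Lemma \ref{lm2}, and $u=|A_3P_3|/|P_3A_4|=x_3$. The formula $|MP|/|MC|=vu/(1+v+vu)$ of Lemma \ref{lm1} then yields
\[
\frac{|Q_1R_1|}{|Q_1A_4|}=\frac{x_1x_2x_3}{1+x_1+x_1x_2+x_1x_2x_3}.
\]
Multiplying by $V_{A_1P_1Q_1A_4}=x_1^2x_2/[(1+x_1)(1+x_1+x_1x_2)]$ from Lemma \ref{lm2} produces the stated formula.
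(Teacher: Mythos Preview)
Your argument is correct. The overall architecture matches the paper's proof: identify $T_1\cap T_2\cap T_3$ as the tetrahedron $A_1P_1Q_1R_1$, write $V_{A_1P_1Q_1R_1}=V_{A_1P_1Q_1A_4}\cdot\dfrac{|Q_1R_1|}{|Q_1A_4|}$ via Lemma~\ref{lm0}, and then compute the ratio $\dfrac{|Q_1R_1|}{|Q_1A_4|}$ by a planar cevian computation.

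Where you diverge from the paper is in the choice of the plane for that last step. The paper works in the plane $\sigma_2$ and uses the triangle $A_1P_2A_4$: it first computes $v=\dfrac{|A_1Q_1|}{|Q_1P_2|}=x_1(1+x_2)$ from Lemma~\ref{lm35} in triangle $A_1A_2A_3$, then $u=\dfrac{x_2x_3}{1+x_2}$ from Lemma~\ref{lm1} in triangle $A_2A_3A_4$, and finally applies Lemma~\ref{lm1} in $A_1P_2A_4$. You instead work in the plane $\sigma_1$ and use the triangle $P_1A_3A_4$, observing that $\sigma_1\cap\sigma_3$ is the line $P_1P_3$, so that $R_1=Q_1A_4\cap P_1P_3$. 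Your $v=\dfrac{|P_1Q_1|}{|Q_1A_3|}=\dfrac{x_1x_2}{1+x_1}$ is inherited from the computation in Lemma~\ref{lm2}, and your $u=x_3$ is read directly from the definition of $P_3$. Both routes yield $\dfrac{|Q_1R_1|}{|Q_1A_4|}=\dfrac{x_1x_2x_3}{1+x_1+x_1x_2+x_1x_2x_3}$. Your choice is a bit more economical, since it recycles the ratio already obtained in Lemma~\ref{lm2} and avoids the two auxiliary computations; the paper's choice, on the other hand, is the one that generalizes verbatim in Section~2 (their triangle $A_1P_2A_4$ becomes the triangle $A^0_iA^{j-2}_{i+1}A^0_{i+j}$ of Lemma~\ref{cuts}).
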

\begin{proof}
In the notation of Figure \ref{F2}, the intersection $T_1\cap T_2 \cap T_3$ is the tetrahedron $A_1 P_1 Q_1 R_1$. Looking at Figure \ref{F6} 
\begin{figure}[h]\centering
\includegraphics[height=3in]{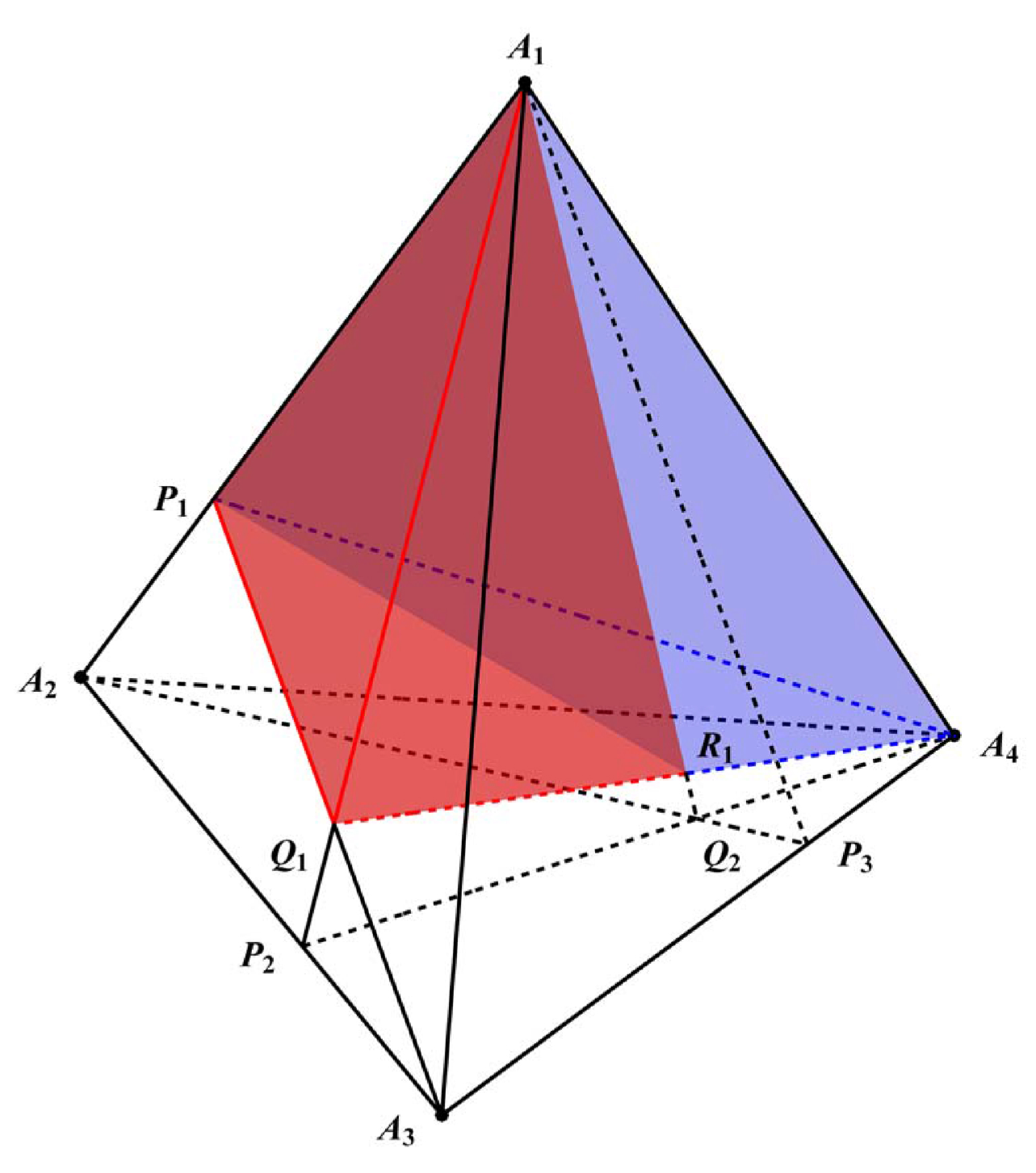}
\caption{Small tetrahedron}
\label{F6}
\end{figure}
and using Lemma \ref{lm0}, we determine that 
\[V_{A_1 P_1 Q_1 R_1}= V_{A_1 P_1 Q_1 A_4} \frac{|Q_1 R_1|}{|Q_1 A_4|},\] where 
\[V_{A_1 P_1 Q_1 A_4}=V_{T_1 \cap T_2} = \frac{x_1^2x_2}{(1+x_1)(1+x_1+x_1x_2)}\] by Lemma \ref{lm2}. 
To find the remaining ratio
$\frac{|Q_1 R_1|}{|Q_1 A_4|}$, consider the triangle $A_1 P_2 A_4$ as depicted in Figure \ref{F7}.
\begin{figure}[h]\centering
\includegraphics[height=2.5in]{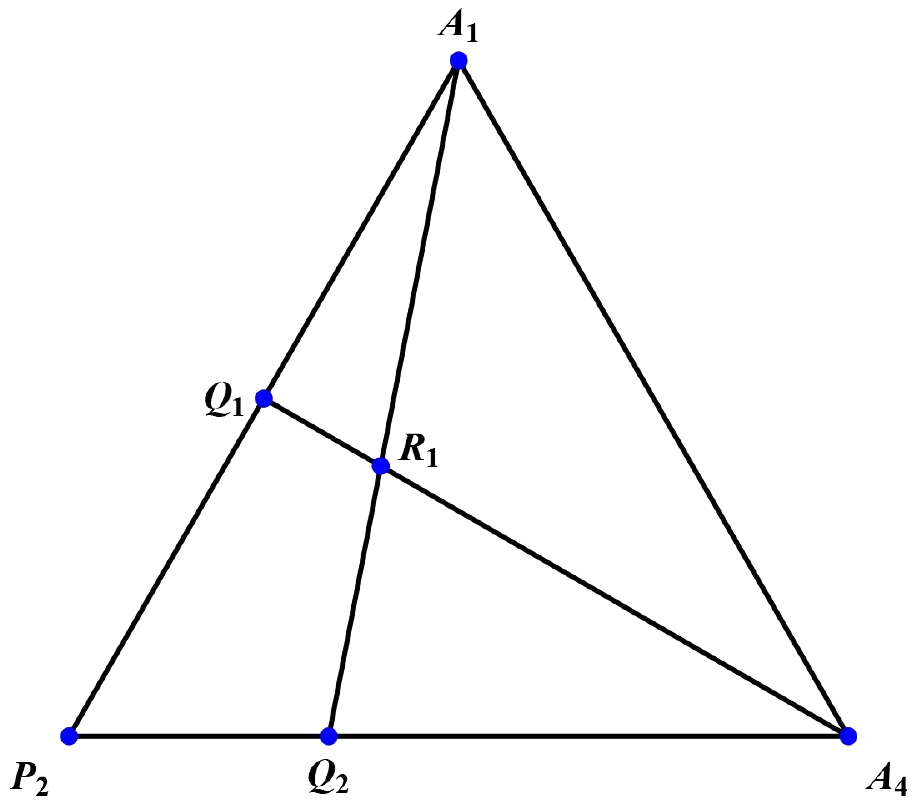}
\caption{Ratio $\frac{|Q_1 R_1|}{|Q_1 A_4|}$}
\label{F7}
\end{figure}
Here we have $v=x_1(1+x_2)$ by Lemma \ref{lm35} applied to the triangle $A_1 A_2 A_3$ and $u=\frac{x_2x_3}{1+x_2}$ 
by Lemma \ref{lm1} applied to the triangle
$A_2 A_3 A_4$.  Therefore Lemma \ref{lm1} applied to the triangle $A_1 P_2 A_4$ yields 
\[\frac{|Q_1 R_1|}{|Q_1 A_4|}=\frac{vu}{1+v+vu}=\frac{x_1x_2x_3}{1+x_1+x_1x_2+ x_1x_2x_3},\]
and the result follows.
\end{proof}

It follows as in the proof of Lemma \ref{lm3} that
$$
V_{T_2\cap T_3 \cap T_4}=\frac{x_2^3x_3^2x_4}{(1+x_2)(1+x_2+x_2x_3)(1+x_2+x_2x_3+x_2x_3x_4)},
$$
$$
V_{T_3\cap T_4 \cap T_1}=\frac{x_3^3x_4^2x_1}{(1+x_3)(1+x_3+x_3x_4)(1+x_3+x_3x_4+x_3x_4x_1)},
$$
$$
V_{T_4\cap T_1 \cap T_2}=\frac{x_4^3x_1^2x_2}{(1+x_4)(1+x_4+x_4x_1)(1+x_4+x_4x_1+x_4x_1x_2)}.
$$

\bigskip

{\bf Proof of (\ref{e-1}) in Theorem \ref{tetra-new}:}
Assume first that $x_1 x_2 x_3 x_4 >1$.
Using the principle of inclusion-exclusion, we obtain
\[\begin{aligned}
V_{R_1 R_2 R_3 R_4}= &V_{A_1 A_2 A_3 A_4} - V_{T_1}-V_{T_2}-V_{T_3}-V_{T_4}\\
&+V_{T_1\cap T_3}+V_{T_2\cap T_4} +V_{T_1 \cap T_2} + V_{T_2 \cap T_3}+ V_{T_3 \cap T_4} +V_{T_4 \cap T_1} \\
&-V_{T_1\cap T_2 \cap T_3} -V_{T_2 \cap T_3 \cap T_4}-V_{T_3 \cap T_4 \cap T_1} - V_{T_4 \cap T_1 \cap T_2}.
\end{aligned}\]

Formula (\ref{e-1}) now follows from the previous formulas for the above volumes together with the following identity (\ref{ie}).

\begin{equation}\label{ie}
\begin{aligned}
&1-\frac{x_1}{1+x_1}-\frac{x_2}{1+x_2}-\frac{x_3}{1+x_3}-\frac{x_4}{1+x_4}
+\frac{x_1^2x_2}{(1+x_1)(1+x_1+x_1x_2)}\\
&+\frac{x_2^2x_3}{(1+x_2)(1+x_2+x_2x_3)}
+\frac{x_3^2x_4}{(1+x_3)(1+x_3+x_3x_4)}\\
&+\frac{x_4^2x_1}{(1+x_4)(1+x_4+x_4x_1)}
+\frac{x_1x_3}{(1+x_1)(1+x_3)}+\frac{x_2x_4}{(1+x_2)(1+x_4)}\\
&-\frac{x_1^3x_2^2x_3}{(1+x_1)(1+x_1+x_1x_2)(1+x_1+x_1x_2+x_1x_2x_3)}\\
&-\frac{x_2^3x_3^2x_4}{(1+x_2)(1+x_2+x_2x_3)(1+x_2+x_2x_3+x_2x_3x_4)}\\
&-\frac{x_3^3x_4^2x_1}{(1+x_3)(1+x_3+x_3x_4)(1+x_3+x_3x_4+x_3x_4x_1)}\\
&-\frac{x_4^3x_1^2x_2}{(1+x_4)(1+x_4+x_4x_1)(1+x_4+x_4x_1+x_4x_1x_2)}\\
&=\frac{(x_1x_2x_3x_4-1)^3}{(1+x_1+x_1x_2+x_1x_2x_3)(1+x_2+x_2x_3+x_2x_3x_4)}\times\\
&\quad \,\, \frac{1}{(1+x_3+x_3x_4+x_3x_4x_1)(1+x_4+x_4x_1+x_4x_1x_2)}.
\end{aligned}
\end{equation}

\vskip 5pt
\noindent Identity (\ref{ie}) can be verified either manually or by using a software like Mathematica or Maple.

The case $x_1 x_2 x_3 x_4 <1$ can be treated similarly to \cite{lm} by reversing the orientation of the cycle $(A_1 A_2 A_3 A_4)$ to $(A_1 A_4 A_3 A_2)$ and 
using the substitution $x_1\mapsto \frac{1}{x_4}$, $x_2\mapsto \frac{1}{x_1}$, $x_3\mapsto \frac{1}{x_2}$, $x_4\mapsto \frac{1}{x_3}$
that reduces it to the case $x_1 x_2 x_3 x_4 >1$.
\qed

\section{Routh's theorem for simplices and related algebraic identities}

Reviewing the formulas for the volumes of various tetrahedra appearing in the application of the inclusion-exclusion principle in Section 1, it is possible to observe the pattern that holds in the general case of an $(n-1)$-dimensional simplex \\ $S=A^0_1 \ldots A^0_n$. (In this section we assume that $n\geq 4$.)

We will work with the cycle $(A^0_1 \ldots A^0_n)$ and for simplicity of notation we will consider all indices modulo $n$, that is, we identify the index $n+1$ with $1$, and so on.
For each $i=1, \ldots, n$ choose a point $A^1_i$ on the edge $A^0_i A^0_{i+1}$ of $S$ and denote $\frac{|A^0_i A^1_i|}{|A^1_i A^0_{i+1}|}=x_i$.
Let $H_i$ be the half-space given by the hyperplane $\sigma_i$ containing points $A^0_1 \ldots A^0_{i-1} A^1_i A^0_{i+2} \ldots A^0_n$ in the direction of the point $A^0_i$, and 
$T_i$ the intersection of $H_i$ with the original simplex $S$.  We will assume that $\prod_{i=1}^n x_i>1$.  In this case, the intersection of all half-spaces 
$H_i$ and $S$ is the $(n-1)$-dimensional simplex $\bigcap_{i=1}^n T_i$. 

We will obtain a generalization of Routh's theorem by determining a formula for the volume of the simplex $\bigcap_{i=1}^n T_i$ in terms of $x_i$'s. 

An additional notation is in order. For each $i=1, \ldots, n$ and $j=2, \ldots, n-1$, let $A^j_i$ be the point of the intersection of 
the lines $A^{j-1}_i A^0_{i+j}$ and $A^{j-1}_{i+1}A^0_i$. 

Our argument will rely heavily on the triangles described in the following Lemma \ref{cuts} and the ratios calculated in Lemma \ref{lgen}.

\begin{lm}\label{cuts}
For every $i=1, \ldots, n$ and $j=2, \ldots, n-1$ consider the triangle $A^0_i A^{j-2}_{i+1} A^0_{i+j}$ with the point $A^{j-1}_i$ on the edge $A^0_i A^{j-2}_{i+1}$, 
the point $A^{j-1}_{i+1}$ on the edge $A^{j-2}_{i+1} A^0_{i+j}$, and the point $A^j_i$ as depicted in Figure \ref{F8}.

\begin{figure}[h]\centering
\includegraphics[height=3in]{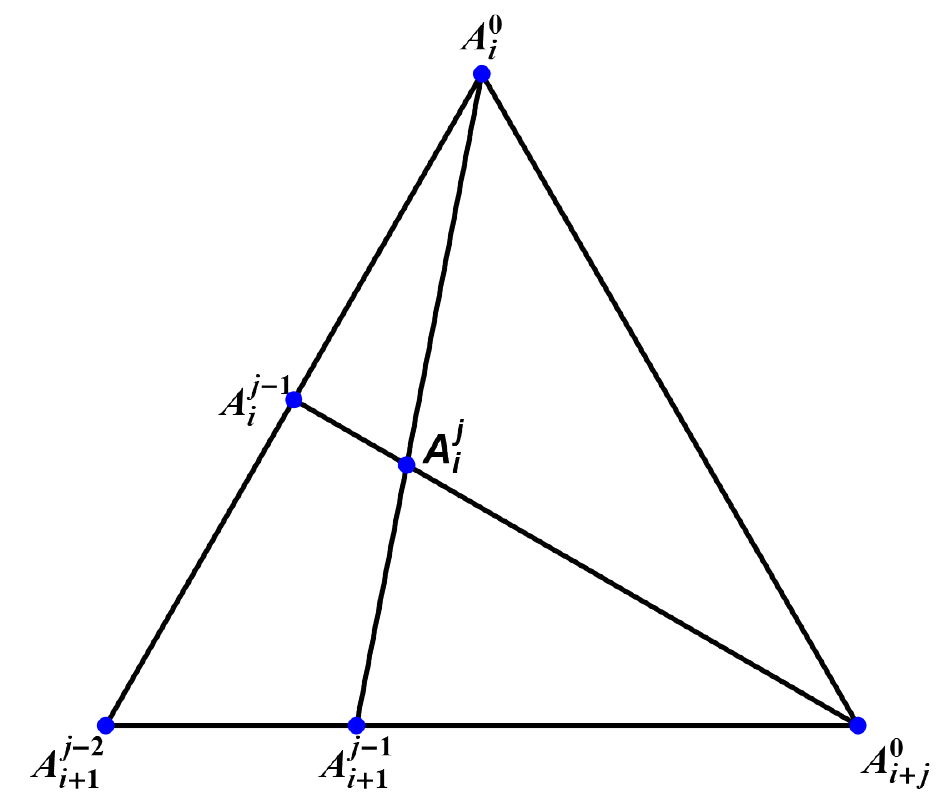}
\caption{General position}
\label{F8}
\end{figure}

Then the line $A^0_iA^{j-1}_{i+1}$ and the point $A^j_i$ belong to the hyperplane $\sigma_{i+j-1}$.
\end{lm}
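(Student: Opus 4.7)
The plan is to split the lemma into two claims: that the line $A^0_i A^{j-1}_{i+1}$ lies in $\sigma_{i+j-1}$, and that the point $A^j_i$ also lies in $\sigma_{i+j-1}$. Since $A^j_i$ is defined as the intersection of $A^{j-1}_i A^0_{i+j}$ with $A^{j-1}_{i+1} A^0_i$, it already sits on the line $A^0_i A^{j-1}_{i+1}$, so the second claim will be an automatic consequence of the first. Hence it suffices to show that both $A^0_i$ and $A^{j-1}_{i+1}$ belong to $\sigma_{i+j-1}$.

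For $A^0_i$ the containment is routine: $\sigma_k$ is defined to contain every vertex $A^0_l$ with $l \notin \{k, k+1\} \pmod n$, and for $2 \leq j \leq n-1$ one has $i \not\equiv i+j-1, i+j \pmod n$, so $A^0_i \in \sigma_{i+j-1}$. I will refer to this type of containment as the ``easy'' one below.

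The heart of the argument is $A^{j-1}_{i+1} \in \sigma_{i+j-1}$. Reindexing with $m = j-1$ and $k = i+1$, this reads $A^m_k \in \sigma_{k+m-1}$ for every $m \in \{1,\ldots,n-2\}$ and every $k$. I would prove it by induction on $m$, in the strengthened form: \emph{for each such $m$ and $k$, the point $A^m_k$ lies in every one of the $m$ hyperplanes $\sigma_k, \sigma_{k+1}, \ldots, \sigma_{k+m-1}$.} The base case $m=1$ is the defining property of $A^1_k$. In the inductive step, $A^m_k$ is the intersection of the two lines $\ell_1 = A^{m-1}_k A^0_{k+m}$ and $\ell_2 = A^{m-1}_{k+1} A^0_k$. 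For any $l \in \{0,\ldots,m-2\}$ the inductive hypothesis supplies $A^{m-1}_k \in \sigma_{k+l}$ and the easy containment supplies $A^0_{k+m} \in \sigma_{k+l}$ (since $k+m \not\equiv k+l, k+l+1$), so $\ell_1 \subset \sigma_{k+l}$ and hence $A^m_k \in \sigma_{k+l}$. For the remaining value $l = m-1$, the inductive hypothesis applied to $A^{m-1}_{k+1}$ with $l' = m-2$ gives $A^{m-1}_{k+1} \in \sigma_{(k+1)+(m-2)} = \sigma_{k+m-1}$, and the easy containment gives $A^0_k \in \sigma_{k+m-1}$, so $\ell_2 \subset \sigma_{k+m-1}$ and therefore $A^m_k \in \sigma_{k+m-1}$. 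This closes the induction.

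The main obstacle I anticipate is identifying the correct inductive statement. A naive induction that carries only the conclusion $A^m_k \in \sigma_{k+m-1}$ does not propagate, because the two defining lines of $A^m_k$ are each responsible for a different range of target hyperplanes; tracking all $m$ consecutive hyperplanes through $A^m_k$ simultaneously is precisely what allows the two lines to cooperate in the inductive step.
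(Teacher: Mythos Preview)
Your proof is correct and follows essentially the same inductive route as the paper: reduce to showing $A^0_i$ and $A^{j-1}_{i+1}$ lie in $\sigma_{i+j-1}$, handle $A^0_i$ by the ``easy'' containment, and obtain $A^{j-1}_{i+1}\in\sigma_{i+j-1}$ by induction on the superscript, shifting the base index by one.

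One remark: your strengthened inductive hypothesis (that $A^m_k$ lies in \emph{all} of $\sigma_k,\ldots,\sigma_{k+m-1}$) is true but unnecessary, and your closing comment that ``a naive induction that carries only the conclusion $A^m_k\in\sigma_{k+m-1}$ does not propagate'' is mistaken. In fact that naive induction is exactly what the paper does: to get $A^m_k\in\sigma_{k+m-1}$ you used only the line $\ell_2=A^{m-1}_{k+1}A^0_k$, and for that you invoked the inductive hypothesis at $(k+1,m-1)$ in the form $A^{m-1}_{k+1}\in\sigma_{(k+1)+(m-1)-1}=\sigma_{k+m-1}$, which is precisely the ``naive'' statement. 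The extra containments you prove via $\ell_1$ are correct but play no role in establishing the lemma's conclusion. The paper also includes a short paragraph verifying that the six points in Figure~\ref{F8} are coplanar (so the triangle picture is legitimate); your argument bypasses this because it works directly from the line-intersection definition of $A^j_i$ and never relies on the planar picture.
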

\begin{proof}
Note that all points in Figure \ref{F8} belong to the same plane. For $j=2$, it follows from the choice of the points $A^1_i$ and $A^1_{i+1}$. For $j>2$, 
it follows by induction on $j$ since $A^{j-1}_i$ lies on the line given by $A^0_{i}$ and $A^{j-2}_{i+1}$, and $A^{j-1}_{i+1}$ lies on the line given by $A^0_{i+j}$ and $A^{j-2}_{i+1}$.

The statement of the lemma is true for $j=2$ since both $A^0_i$ and $A^1_{i+1}$ belong to $\sigma_{i+1}$. For $j>2$, it follows by induction on $j$ since the point $A^0_i$ 
belongs to $\sigma_{i+j-1}$ and the point 
$A^{j-1}_{i+1}$ belongs to $\sigma_{i+j-1}$ by induction applied to the triangle $A^0_{i+1}A^{j-3}_{i+2}A^0_{i+j}$.
\end{proof}

In relation to the triangle in Figure \ref{F8}, define 
\[v_{i,j}=\frac{|A^0_i A^{j-1}_i|}{|A^0_i A^{j-2}_{i+1}|}, \ \ u_{i,j}=\frac{|A^{j-2}_{i+1}A^{j-1}_{i+1}|}{|A^{j-2}_{i+1} A^0_{i+j}|}, \ \ 
t_{i,j}=\frac{|A^j_iA^{j-1}_i|}{|A^0_{i+j}A^{j-1}_i|}.\]

\begin{lm}\label{lgen}
Given $i=1, \ldots n$ and $j=2, \ldots, n-1$, we have 
\[v_{i,j}=x_i+x_i x_{i+1} + \ldots + x_ix_{i+1} \ldots x_{i+j-1},\]
\[u_{i,j}=\frac{x_{i+1}\ldots x_{i+j}}{1+x_{i+1}+x_{i+1}x_{i+2}+ \ldots + x_{i+1}\ldots x_{i+j-1}},\]
\[t_{i,j}=\frac{x_{i}\ldots x_{i+j-1}}{1+x_{i}+x_{i}x_{i+1}+ \ldots + x_{i}\ldots x_{i+j-1}}.\]
\end{lm}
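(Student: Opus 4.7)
The plan is induction on $j$, propagating the three quantities level-by-level through the nested family of triangles set up in Lemma \ref{cuts}. The ratios $v_{i,j}$, $u_{i,j}$, $t_{i,j}$ are precisely the $|AP|/|PK|$, $|MP|/|PC|$, and $|MP|/|MC|$ outputs that Lemmas \ref{lm35} and \ref{lm1} compute for appropriate faces of the form $A^0_i A^{j-2}_{i+1} A^0_{i+j}$, and the induction simply tracks these outputs as $j$ increases.

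For the base case $j=2$, the relevant triangle is the face $A^0_i A^0_{i+1} A^0_{i+2}$ of the original simplex, whose two tagged points satisfy $|A^0_i A^1_i|/|A^1_i A^0_{i+1}| = x_i$ and $|A^0_{i+1} A^1_{i+1}|/|A^1_{i+1} A^0_{i+2}| = x_{i+1}$ directly. Lemma \ref{lm35} with $v = x_i$, $u = x_{i+1}$ produces $v_{i,2} = x_i(1+x_{i+1})$; the first formula of Lemma \ref{lm1} applied to the shifted face $A^0_{i+1} A^0_{i+2} A^0_{i+3}$ gives $u_{i,2}$; and the second formula of Lemma \ref{lm1} applied to the original face produces $t_{i,2}$.

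For the inductive step the key geometric observation is that on the triangle $A^0_i A^{j-2}_{i+1} A^0_{i+j}$ at index $(i,j)$, the intrinsic Lemma \ref{lm35}-ratios are already $v_{i,j-1}$ on the edge $A^0_i A^{j-2}_{i+1}$ and $u_{i,j-1}$ on the edge $A^{j-2}_{i+1} A^0_{i+j}$, because $A^{j-1}_i$ is precisely the cevian-intersection point of the smaller triangle at index $(i,j-1)$ and, symmetrically, $A^{j-1}_{i+1}$ is the cevian-intersection point of the triangle at $(i+1,j-1)$. Applying Lemma \ref{lm35} then yields $v_{i,j} = v_{i,j-1}(1+u_{i,j-1})$; applying the first formula of Lemma \ref{lm1} to the shifted $(i+1,j)$-triangle yields $u_{i,j} = v_{i+1,j-1}\, u_{i+1,j-1}/(1+v_{i+1,j-1})$; and the second formula of Lemma \ref{lm1} applied to the $(i,j)$-triangle yields $t_{i,j} = v_{i,j-1}\, u_{i,j-1}/(1+v_{i,j-1}+v_{i,j-1}\, u_{i,j-1})$. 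Substituting the inductive hypothesis and simplifying, crucially using the telescoping identity $v_{i,j-1}\cdot u_{i,j-1} = x_i x_{i+1} \cdots x_{i+j-1}$, collapses each recurrence to the claimed closed form.

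The main obstacle is the index-bookkeeping in the inductive step: the $v$ on the $(i,j)$-triangle is controlled by the pair $(i,j-1)$ while its $u$ is controlled by the shifted pair $(i+1,j-1)$, and the formula for $u_{i,j}$ is extracted from yet another shifted triangle at $(i+1,j)$, so the $i \mapsto i+1$ shift has to be kept straight throughout. Once this is set up, the remaining algebraic manipulations are mechanical.
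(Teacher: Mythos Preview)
Your proposal is correct and follows essentially the same route as the paper: induction on $j$, feeding the outputs of Lemmas~\ref{lm35} and~\ref{lm1} on the triangle $A^0_i A^{j-2}_{i+1} A^0_{i+j}$ from Lemma~\ref{cuts} back in as the $v$- and $u$-inputs at the next level, and collapsing the recurrences via the telescoping $v_{i,j-1}u_{i,j-1}=x_i\cdots x_{i+j-1}$. Your treatment of the $i\mapsto i+1$ shift for the $u_{i,j}$ recurrence is in fact more careful than the paper's own displayed formula (which, as written, would yield $t_{i,j}$ rather than $u_{i,j}$); the minor labeling slip in your text --- ``$(i+1,j)$-triangle'' where the data you use is at level $(i+1,j-1)$ --- does not affect the argument, since the subscripts in your recurrence are correct.
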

\begin{proof}
We proceed by induction on $j$. 
For $j=2$ the statements $v_{i,2}=x_ix_{i+1}$, $u_{i,2}=\frac{x_{i+1}x_{i+2}}{1+x_{i+1}}$, and $t_{i,2}=\frac{x_i x_{i+1}}{1+x_i+x_i x_{i+1}}$
follow in the same way as in the proof of Lemma \ref{lm2}.
The formulae $v_{i,3}=x_ix_{i+1}x_{i+2}$, $u_{i,3}=\frac{x_{i+1}x_{i+2}x_{i+3}}{1+x_{i+1}+x_{i+1}x_{i+2}}$, and \\ $t_{i,2}=\frac{x_i x_{i+1}x_{i+2}}{1+x_i+x_i x_{i+1}+x_ix_{i+1}x_{i+2}}$
for $j=3$ follow in the same way as in the proof of Lemma \ref{lm3}.

For the inductive step, apply Lemmas \ref{lm35} and \ref{lm1} to the triangle on Figure \ref{F8} with $v=v_{i,j-1}$ and $u=u_{i,j-1}$. By Lemma \ref{lm35}, we infer that
\[v_{i,j}=v_{i,j-1}(1+u_{i,j-1})=x_i+x_i x_{i+1} + \ldots + x_ix_{i+1}\ldots x_{i+j-1}.\] By Lemma \ref{lm1}, we obtain 
\[u_{i,j}=\frac{v_{i,j-1}u_{i,j-1}}{1+v_{i,j-1}+v_{i,j-1}u_{i,j-1}}=\frac{x_{i+1}\ldots x_{i+j}}{1+x_{i+1}+x_{i+1}x_{i+2}+ \ldots + x_{i+1}\ldots x_{i+j-1}}.\]

Finally, the formula for $t_{i,j}$ follows from the formulas for $v_{i,j}$ and $u_{i,j}$ using Lemma \ref{lm1}.
\end{proof}

We will determine the volume of the simplex $\bigcap_{i=1}^n T_i$ (with the vertices $A^{n-1}_1, \ldots, $ $A^{n-1}_n)$ using the inclusion-exclusion principle.
For this we shall compute the volumes of all simplices $\bigcap_{i\in I} T_i$, where $I\subsetneq \{1, \ldots, n\}$. An important property of such simplices
$\bigcap_{i\in I} T_i$ is that they contain the original vertices $A^0_j$, where $j\notin I$. 

From now on, we will assume that $I\subsetneq \{1, \ldots, n\}$.
We now proceed to determine the vertices of the simplices $\bigcap_{i\in I} T_i$ and compute their volumes. 
When calculating the volume of $\bigcap_{i \in I} T_i$, the crucial role is played by the distribution of elements $i\in I$ along the cycle $C=(1 \ldots n)$. 
Assume that the set $I$ consists of blocks of consecutive elements along the cycle $C$ and keep in mind that a block containing $n$ can start before 
$n$ and continue through $n$ to $1$ and further. Denote by $\mathcal{B}(I)$ the set of all blocks of $I$ along the cycle $C$.
To each block of $I$, say $B=\{k, k+1, \ldots, k+l\}$, we assign the expression 
\[V(B)=\prod_{j=k}^{k+l} \frac{\prod_{a=k}^j x_a}{1+ \sum_{b=k}^{j} \prod_{a=k}^b x_a}.\]

\noindent
For example, 

\[\begin{aligned}V(\{2,3,4,5\}) = &\frac{x_2}{1+x_2}\frac{x_2x_3}{1+x_2+x_2x_3}\frac{x_2x_3x_4}{1+x_2+x_2x_3+x_2x_3x_4}\times \\
&\frac{x_2x_3x_4x_5}{1+x_2+x_2x_3+x_2x_3x_4+x_2x_3x_4x_5}.
\end{aligned}\]

\vskip 8pt
\begin{pr}\label{seq}
Let $B=\{k, k+1, \ldots, k+l\}$ be a subset of $I$ (hence $l<n-1$). Then 
the vertices of the simplex $\bigcap_{i \in B} T_i$ are $A^0_k, A^1_k, \ldots, A^{l+1}_k, A^0_{k+l+2}, \ldots, A^0_{n+k-1}$ and 
\[V_{\bigcap_{i \in B} T_i}=V(B)=\prod_{j=k}^{k+l} \frac{\prod_{a=k}^j x_a}{1+ \sum_{b=k}^{j} \prod_{a=k}^b x_a}\]
\end{pr}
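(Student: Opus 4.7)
I will prove the proposition by induction on $l$, the size of the block minus one. The base case $l=0$ concerns $B=\{k\}$, and $\bigcap_{i\in B}T_i=T_k=S\cap H_k$: since $\sigma_k$ passes through $A^0_j$ for every $j\notin\{k,k+1\}$ together with $A^1_k$, the only vertex of $S$ strictly outside $H_k$ is $A^0_{k+1}$, so the cut simply replaces $A^0_{k+1}$ by $A^1_k$, and Lemma \ref{lm0} yields $V(\{k\})=x_k/(1+x_k)$, matching the formula.

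\textbf{Inductive step.} Assume the statement for $B'=\{k,\ldots,k+l-1\}$, so that $\bigcap_{i\in B'}T_i$ is the simplex with vertices $A^0_k,A^1_k,\ldots,A^l_k,A^0_{k+l+1},\ldots,A^0_{n+k-1}$ and volume $V(B')$. Writing
\[
\bigcap_{i\in B}T_i \;=\; \Bigl(\bigcap_{i\in B'}T_i\Bigr)\cap H_{k+l},
\]
I will show that the cutting hyperplane $\sigma_{k+l}$ contains every vertex of $\bigcap_{i\in B'}T_i$ except $A^l_k$ and $A^0_{k+l+1}$; that $A^l_k$ lies strictly on the $A^0_{k+l}$ side of $\sigma_{k+l}$ while $A^0_{k+l+1}$ lies strictly on the opposite side; and that $\sigma_{k+l}$ meets the edge $A^l_k A^0_{k+l+1}$ precisely at $A^{l+1}_k$ (immediate from the definition $A^{l+1}_k = A^l_k A^0_{k+l+1}\cap A^l_{k+1}A^0_k$ together with Lemma \ref{cuts}). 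These incidences force the cut to chop off $A^0_{k+l+1}$ alone and replace it by $A^{l+1}_k$, giving the claimed vertex list.

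\textbf{The key sub-claim} is that $A^m_k\in\sigma_{k+l}$ for $m=0,1,\ldots,l-1$. For the original vertices $A^0_j$ with $j\notin\{k+l,k+l+1\}$ this is built into the definition of $\sigma_{k+l}$. For $m\ge 1$ I will establish the auxiliary fact that $A^m_k$ lies in the affine span of $A^0_k,A^0_{k+1},\ldots,A^0_{k+m}$: by induction on $m$, $A^{m-1}_k$ lies in the span of $A^0_k,\ldots,A^0_{k+m-1}$, hence the line $A^{m-1}_kA^0_{k+m}$ and thus the point $A^m_k$ lies in the span of $A^0_k,\ldots,A^0_{k+m}$. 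When $m\le l-1$, every one of these spanning vertices lies in $\sigma_{k+l}$, so $A^m_k$ does as well. A short sign check using, for instance, the barycentric equation $\lambda_{k+l+1}=x_{k+l}\lambda_{k+l}$ of $\sigma_{k+l}$ confirms that $A^l_k$ is on the $A^0_{k+l}$ side while $A^0_{k+l+1}$ is on the other.

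\textbf{Volume and obstacle.} Once the incidence picture is established, the cut replaces $A^0_{k+l+1}$ by the point $A^{l+1}_k$ lying on the edge $A^l_k A^0_{k+l+1}$, so the $(n-1)$-dimensional analogue of Lemma \ref{lm0} applied with apex $A^l_k$ yields
\[
V_{\bigcap_{i\in B}T_i}\;=\;V_{\bigcap_{i\in B'}T_i}\cdot\frac{|A^l_k A^{l+1}_k|}{|A^l_k A^0_{k+l+1}|}\;=\;V(B')\cdot t_{k,\,l+1},
\]
and Lemma \ref{lgen} identifies $t_{k,l+1}$ as precisely the extra factor $\prod_{a=k}^{k+l}x_a\,\big/\,\bigl(1+\sum_{b=k}^{k+l}\prod_{a=k}^b x_a\bigr)$ relating $V(B)$ to $V(B')$, closing the induction. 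I expect the sub-claim about which vertices of $\bigcap_{i\in B'}T_i$ lie on $\sigma_{k+l}$ to be the main obstacle, because Lemma \ref{cuts} alone only places $A^m_k$ on $\sigma_{k+m-1}$; the affine-span observation is what bridges the gap, after which only routine bookkeeping remains.
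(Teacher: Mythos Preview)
Your proposal is correct and follows essentially the same inductive strategy as the paper: cut $\bigcap_{i\in B'}T_i$ by $H_{k+l}$, argue that only the edge $A^l_k A^0_{k+l+1}$ is affected, invoke Lemma~\ref{cuts} to identify the new vertex as $A^{l+1}_k$, and finish with Lemma~\ref{lm0} and the ratio $t_{k,l+1}$ from Lemma~\ref{lgen}. Your affine-span sub-claim (that $A^m_k\in\operatorname{aff}\{A^0_k,\ldots,A^0_{k+m}\}$, hence $A^m_k\in\sigma_{k+l}$ for $m\le l-1$) is exactly the content of the paper's remark that ``all vertices $A^0_k,A^1_k,\ldots,A^l_k$ are included in the simplex given by vertices $A^0_k,\ldots,A^0_{k+l}$''; you simply make explicit why this forces only one edge to be cut, which the paper leaves terse.
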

\begin{proof}
We proceed by induction on $l$. If $l=0$, the statement $V_{T_k}=\frac{x_k}{1+x_k}$ follows from Lemma \ref{lm0}.

\noindent
Assume that $l>0$, the vertices of $\bigcap_{i=k}^{k+l-1} T_i$ are $A^0_k, A^1_k, \ldots, A^l_k, A^0_{k+l+1}, \ldots, A^0_{n+k-1}$,
and \[V_{\bigcap_{i=k}^{k+l-1} T_i}=\prod_{j=k}^{k+l-1} \frac{\prod_{a=k}^j x_a}{1+ \sum_{b=k}^{j} \prod_{a=k}^b x_a}.\]
Since $T_{k+l}$ has the vertices $A^0_k, \ldots, A^0_{k+l}, A^1_{k+l}, A^0_{k+l+2}, \ldots, A^0_{n+k-1}$, 
and all vertices $A^0_k, A^1_k, \ldots, A^l_k$ are included in the simplex given by vertices $A^0_k, \ldots, A^0_{k+l}$,
when we cut the simplex $\bigcap_{i=k}^{k+l-1} T_i$ by $H_{k+l}$, all of its edges remain the same except the edge 
$A^l_k A^0_{k+l+1}$. The edge $A^l_k A^0_{k+l+1}$ is replaced by the edge $A^l_k A^{l+1}_k$ as can be seen from Lemma \ref{cuts} and Figure \ref{F8} because $A^{l+1}_k$ belongs to $\sigma_{k+l}$.
Therefore the vertices of $\bigcap_{i \in B} T_i$ are $A^0_k, A^1_k, \ldots, A^{l+1}_k, A^0_{k+l+2}, \ldots, A^0_{n+k-1}$.

Since the vertices $A^l_k$, $A^{l+1}_k$, $A^0_{k+l+1}$ lie on the same line, using 
Lemma \ref{lm0}, we derive that 
\[V_{\bigcap_{i=k}^{k+l} T_i}=V_{\bigcap_{i=k}^{k+l-1} T_i} \frac{|A^{l+1}_k A^l_k|}{|A^0_{k+l+1}A^l_k|}.\] 
Since $\frac{|A^{l+1}_k A^l_k|}{|A^0_{k+l+1}A^l_k|}=t_{k,l+1}$, Lemma \ref{lgen} concludes the inductive step.
\end{proof}

\vskip 5pt
To find $V_{\bigcap_{i\in I} T_i}$, we need to understand the role of blocks. 
Write a proper subset $I$ of $\{1, \ldots, n\}$ as a disjoint union of its blocks
\[I=\bigcup\limits_{B\in \mathcal{B}(I)} B = B_1\cup \ldots \cup B_s=\{k_1, \ldots, k_1+l_1\}\cup \ldots \cup \{k_s, \ldots, k_s+l_s\},\]
$k_1<\ldots<k_s$.

We will show that the vertices of the simplex $\bigcap_{i\in I} T_i$ are 
\begin{equation}\label{sequence}
\begin{aligned}&A^0_{k_1}, A^1_{k_1}, \ldots, A^{l_1+1}_{k_1}, A^0_{k_1+l_1+2}, \ldots, A^0_{k_2}, A^1_{k_2}, \ldots, A^{l_2+1}_{k_2}, A^0_{k_2+l_2+2}, \ldots \\
&A^0_{k_s}, A^1_{k_s}, \ldots, A^{l_s+1}_{k_s}, A^0_{k_s+l_s+2}, \ldots, A^0_{k_1+n-1}.
\end{aligned}
\end{equation} 

For $k_1\leq k<k_1+n$, denote $S^k_I=\bigcap_{i\in I|k_1\leq i\leq k} T_i$ and list these $(n-1)-$dimensional simplices in the order 
$S^{k_1}_I, S^{k_1+1}_I, \ldots, S^{k_1+n-1}_I$, where $S^{k_1+n-1}_I=\bigcap_{i\in I} T_i$.

\begin{pr}\label{independence}
\[V_{\bigcap_{i\in I} T_i}=\prod_{B\in \mathcal{B}(I)} V(B) = \prod_{B\in \mathcal{B}(I)} V_{\bigcap_{i \in B} T_i}.\]
\end{pr}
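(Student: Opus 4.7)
The plan is to extend the inductive argument of Proposition \ref{seq} from a single block to the full set $I$ by processing the cuts $T_k$ for $k \in I$ one at a time in cyclic order $k = k_1, k_1+1, \ldots, k_1+n-1$, tracking both the vertex list and the volume of the intermediate simplices $S^k_I$. The guiding principle is a locality property: the cut for an index $k$ inside a block $B_j$ affects only the portion of the current simplex originating from $B_j$ and is transparent to the vertices produced by the other blocks.

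First, I would establish by induction on $k$ that $S^k_I$ has exactly the vertex structure predicted by \eqref{sequence}, where each already-completed block $B_{j'}$ has contributed its full set of vertices $A^0_{k_{j'}}, A^1_{k_{j'}}, \ldots, A^{l_{j'}+1}_{k_{j'}}$, the block $B_j$ currently being processed has contributed $A^0_{k_j}, \ldots, A^m_{k_j}$ so far (with the original $A^0_{k_j+m+1}$ still present), and the not-yet-reached blocks leave only original vertices. The inductive step requires verifying that the hyperplane $\sigma_{k_j+m}$ (a) passes through every current vertex except $A^0_{k_j+m+1}$, (b) places $A^m_{k_j}$ strictly in $H_{k_j+m}$, and (c) meets the edge $A^m_{k_j}A^0_{k_j+m+1}$ at $A^{m+1}_{k_j}$. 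Point (c) is Lemma \ref{cuts} with $i = k_j$, $j = m+1$; point (b) follows from the fact that $A^m_{k_j}$ lies on the segment from $A^{m-1}_{k_j}\in \sigma_{k_j+m}$ to $A^0_{k_j+m}\in H_{k_j+m}$; and point (a) reduces to the observation that every already-constructed vertex $A^{m'}_{k_{j'}}$ lies in the affine span of $A^0_{k_{j'}}, A^0_{k_{j'}+1}, \ldots, A^0_{k_{j'}+m'}$, which sits on $\sigma_{k_j+m}$ whenever $\{k_{j'}, \ldots, k_{j'}+m'\} \cap \{k_j+m, k_j+m+1\} = \emptyset$. For $j' \neq j$ this emptiness follows because distinct blocks are separated by at least one non-$I$-index, and for $j' = j$ with $m' < m$ it is automatic.

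With locality in hand, the cut $T_{k_j+m}$ is precisely the operation of replacing $A^0_{k_j+m+1}$ by $A^{m+1}_{k_j}$ along the edge $A^m_{k_j}A^0_{k_j+m+1}$, and the $n$-dimensional analogue of Lemma \ref{lm0} gives
\[\frac{V_{S^{k_j+m}_I}}{V_{S^{k_j+m-1}_I}} = \frac{|A^m_{k_j}A^{m+1}_{k_j}|}{|A^m_{k_j}A^0_{k_j+m+1}|} = t_{k_j,m+1},\]
where for $m=0$ the ratio collapses to $x_{k_j}/(1+x_{k_j})$. Collecting these factors across $m = 0, 1, \ldots, l_j$ using Lemma \ref{lgen} produces exactly $V(B_j) = V_{\bigcap_{i \in B_j}T_i}$, and multiplying across the blocks yields $\prod_{B \in \mathcal{B}(I)} V(B)$.

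The main obstacle will be the locality claim (a) in the inductive step: one has to argue carefully that $\sigma_{k_j+m}$ passes through every vertex produced by the processing of any block other than $B_j$ and through every earlier vertex within $B_j$ itself. The combinatorial core of this is the block-separation property together with the identification of each $A^{m'}_{k_{j'}}$ as a point in the affine span of a contiguous segment of original vertices. Once these are established, the rest of the proof is a transparent telescoping of the ratios supplied by Lemma \ref{lgen}.
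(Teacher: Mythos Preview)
Your proposal is correct and follows essentially the same path as the paper: process the cuts $T_k$ in cyclic order starting from $k_1$, maintain the vertex list of $S^k_I$ by induction, argue that each cut by $\sigma_{k_j+m}$ replaces exactly the vertex $A^0_{k_j+m+1}$ by $A^{m+1}_{k_j}$ on the edge from $A^m_{k_j}$, and read off the volume ratio as $t_{k_j,m+1}$ from Lemma~\ref{lgen}, whence the telescoping product $\prod_j V(B_j)$. The paper justifies locality with a convex-hull containment (the earlier-block vertices lie in the convex hull of $A^0_{k_1},\ldots,A^0_{k_j}\subset T_{k_j}$) rather than your affine-span argument, and is terser about the within-block steps by pointing back to Proposition~\ref{seq}; one small slip in your wording of (a) is that $A^m_{k_j}$ is also off $\sigma_{k_j+m}$, but your item (b) and your restriction $m'<m$ for $j'=j$ show you already handle this correctly.
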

\begin{proof}
We will use the list of simplices $S^{k_1}_I, \ldots, S^{k_1+n-1}_I$ defined above and will show that the vertices of $S^k_I$ consist
of the first $k-k_1+2$ vertices from the list $(\ref{sequence})$ and the vertices $A^0_{k+2}, \ldots A^0_{k_1+n-1}$. 
By Proposition \ref{seq}, this statement is true for $k=k_1, \ldots, k_1+l_1$, which corresponds to the first block $B_1=\{k_1, \ldots, k_1+l_1\}$ of $I$.
Since the values $k=k_1+l_1+1, \ldots, k_2-1$ correspond to the indices that do not belong to $I$, we conclude immediately
that $S^{k_1+l_1}_I=S^{k_1+l_1+1}_I=\ldots =S^{k_2-1}_I$ and its vertices are listed correctly.

The simplex $S^{k_2}_I$ is the intersection of $S^{k_2-1}_I$ and $H_{k_2}$. Since the vertices \\ $A^0_{k_1}, A^1_{k_1}$, $\ldots, A^{l_1+1}_{k_1}, A^0_{k_1+l_1+2}, \ldots, A^0_{k_2}$
of $S^{k_2-1}_I$ belong to the convex hull of \\ $A^0_{k_1}, \ldots, A^0_{k_2}$, the only edge of $S^{k_2}_I$ which is cut by the hyperplane $\sigma_{k_2}$ is the edge
$A^0_{k_2}A^0_{k_2+1}$. This edge is replaced in $S^{k_2}$ by the edge $A^0_{k_2}A^1_{k_2}$, which confirms that the vertices of $S^{k_2}$ are listed correctly.
Taking the values of $k$ in the second block $B_2=\{k_2, \ldots,  k_2+l_2\}$, we proceed as before and always replace only one 
edge, analogously to that of the proof of Proposition $\ref{seq}$, and determine the vertices of $S^k_I$. 

Proceeding like this, in each step corresponding to $k\in I$ we replace a single edge of $S^{k-1}_I$ to obtain $S^k_I$ while each step corresponding to $k\notin I$ yields 
$S^{k-1}_I=S^k_I$.  

Having determined the vertices of the simplices $S^k_I$, the volumes $V_{S^k_I}$ are calculated easily. 
The volume $V_{S^{k_1}_I}=\frac{x_{k_1}}{1+x_{k_1}}$ by Lemma \ref{lm0}.
If $k\notin I$, then $V_{S^{k-1}_I}=V_{S^k_I}$.
If $k\in I$ and $k\neq k_1$, then $k=k_j+s$, where $0\leq s\leq l_j$ for an appropriate $j$. Then 
\[V_{S^k_I}=V_{S^{k-1}_I}t_{k_j,s+1}, \mbox{ where }
t_{k_j,s+1}=\frac{|A^{s+1}_{k_j}A^s_{k_j}|}{|A^0_{k_j+s+1}A^s_i|}.\]

Since
$V_{S^{k}_I}$ is the product of $V_{S^{k_1}_I}$ and the ratios $\frac{V_{S^{l+1}_I}}{V_{S^l_I}}$ for $l=k_1, \ldots, k-1$, it is clear that  
$V_{S^{k_1+l_1}_I}=V(B_1)$, $V_{S^{k_2+l_2}_I}=V(B_1)V(B_2), \ldots$, and
\[V_{\bigcap_{i\in I} T_i}=V_{S^{k_1+n-1}_I}=\prod_{i=1}^s V(B_i)=\prod_{B\in \mathcal{B}(I)} V_{\bigcap_{i \in B} T_i}.\]
\end{proof}

The inclusion-exclusion principle together with Propositions \ref{seq} and \ref{independence} yield the following generalization of Routh's theorem (when $n=3$) and Theorem \ref{tetra-new} (when $n=4$).

\begin{theor}\label{veta}
\[V_{\bigcap_{i=1}^n T_i}=V_{A^{n-1}_1 \ldots A^{n-1}_n}=1+\sum_{\emptyset\neq I \subsetneq \{1, \ldots, n\}} (-1)^{|I|} \prod_{B\in \mathcal{B}(I)} V(B),\]
where $|I|$ is the parity of the number of elements in $I$. 
\end{theor}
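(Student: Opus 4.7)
The plan is to invoke the inclusion--exclusion principle on a covering of $S$ by the half-simplices $T_i$, and then to apply Proposition \ref{independence} to convert every volume $V_{\bigcap_{i\in I}T_i}$ into the product $\prod_{B\in\mathcal{B}(I)}V(B)$. Once the covering is in hand, the rest is formal.

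The essential geometric fact is that under the standing assumption $\prod_{i=1}^n x_i>1$ the pieces $T_i$ cover $S$, equivalently $\bigcap_{i=1}^n U_i=\emptyset$ where $U_i=S\setminus T_i$. To prove this I would pass to barycentric coordinates $(\lambda_1,\dots,\lambda_n)$ relative to $A^0_1,\dots,A^0_n$. Since $A^1_i$ divides $A^0_iA^0_{i+1}$ in the ratio $x_i:1$, it has $\lambda_i=\frac{1}{1+x_i}$ and $\lambda_{i+1}=\frac{x_i}{1+x_i}$, while the remaining vertices of $\sigma_i$ satisfy $\lambda_i=\lambda_{i+1}=0$. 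Therefore $\sigma_i$ is cut out by the equation $\lambda_{i+1}=x_i\lambda_i$, and evaluating $\lambda_{i+1}-x_i\lambda_i$ at $A^0_i$ and $A^0_{i+1}$ identifies $U_i$ as $\{p\in S:\lambda_{i+1}(p)>x_i\lambda_i(p)\}$.

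Suppose now that $p\in\bigcap_{i=1}^n U_i$. A short cyclic induction shows all $\lambda_i(p)>0$: if $\lambda_j(p)=0$ for some $j$, then $\lambda_{j+1}(p)>x_j\lambda_j(p)=0$ forces $\lambda_{j+1}(p)>0$, and propagating around the cycle returns the contradiction $\lambda_j(p)>0$. With strict positivity in hand, multiplying the $n$ cyclic inequalities $\lambda_{i+1}(p)>x_i\lambda_i(p)$ yields $\prod_i\lambda_i(p)>\bigl(\prod_i x_i\bigr)\prod_i\lambda_i(p)$, hence $\prod_i x_i<1$, contradicting our assumption. Thus $\bigcup_{i=1}^n T_i=S$ up to measure zero.

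Standard inclusion--exclusion now reads $V_S=\sum_{\emptyset\neq I\subseteq\{1,\dots,n\}}(-1)^{|I|+1}V_{\bigcap_{i\in I}T_i}$; isolating the full-index term $I=\{1,\dots,n\}$, using $V_S=1$, and substituting $V_{\bigcap_{i\in I}T_i}=\prod_{B\in\mathcal{B}(I)}V(B)$ from Proposition \ref{independence} into each remaining summand produces the claimed identity. I expect the covering step $\bigcup T_i=S$ to be the only nontrivial piece of the argument; once the barycentric inequality settles it, everything else is bookkeeping that Propositions \ref{seq} and \ref{independence} have already arranged. A secondary point requiring care is tracking the overall sign produced by isolating the $I=\{1,\dots,n\}$ term, since its coefficient $(-1)^{n+1}$ depends on the parity of $n$.
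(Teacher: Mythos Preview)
Your approach is exactly the paper's: the proof there consists of the single sentence that inclusion--exclusion together with Propositions~\ref{seq} and~\ref{independence} yields the theorem. Your barycentric argument that $\bigcup_i T_i=S$ under the hypothesis $\prod_i x_i>1$ supplies a step the paper leaves entirely unspoken, and it is correct.

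Your final caveat, however, is not a minor bookkeeping matter but the crux. Isolating the full-index term in the inclusion--exclusion identity $V_S=\sum_{\emptyset\neq I}(-1)^{|I|+1}V_{\bigcap_{i\in I}T_i}$ actually gives
\[
(-1)^{n+1}\,V_{\bigcap_{i=1}^n T_i}\;=\;1+\sum_{\emptyset\neq I\subsetneq\{1,\dots,n\}}(-1)^{|I|}\,\prod_{B\in\mathcal{B}(I)}V(B),
\]
so the formula in the theorem is literally correct only for odd $n$. A direct check at $n=4$, $x_1=\dots=x_4=2$ confirms this: the right-hand side of the theorem evaluates to $-\tfrac{1}{15}$ while the volume is $\tfrac{1}{15}$, and the same sign slip sits in identity~(\ref{ie}) and hence in Theorem~\ref{identity}. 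Do not treat the $(-1)^{n+1}$ as something to be tidied up afterwards; it is a genuine correction that your argument forces on the stated formula.
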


As a consequence of the above theorem we obtain the following identity.

\begin{theor}\label{identity}
\begin{equation}\label{e2}
1+\sum_{\emptyset\neq I \subsetneq \{1, \ldots, n\}} (-1)^{|I|} \prod_{B\in \mathcal{B}(I)} V(B) = 
\frac{(\prod_{i=1}^n x_i-1)^{n-1}}{\prod_{k=1}^n (1+ \sum_{b=k}^{k+n-1} \prod_{a=k}^b x_a)},
\end{equation}
where $|I|$ is the parity of the number of elements in $I$. 
\end{theor}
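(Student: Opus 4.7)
The identity is a statement that two evaluations of the volume of the simplex $A^{n-1}_1 \ldots A^{n-1}_n$ agree: the left-hand side is this volume by Theorem \ref{veta}, while the right-hand side is what one obtains from the vector analysis approach of \cite{yq}. The plan is to carry out this second computation directly, using barycentric coordinates relative to $A^0_1, \ldots, A^0_n$ (normalized so that the simplex $A^0_1 \ldots A^0_n$ has unit volume).

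The main geometric step is to establish by induction on $j$ the barycentric expression
\[
A^j_i \;=\; \frac{A^0_i + x_i A^0_{i+1} + x_i x_{i+1} A^0_{i+2} + \cdots + x_i x_{i+1} \cdots x_{i+j-1} A^0_{i+j}}{1 + x_i + x_i x_{i+1} + \cdots + x_i x_{i+1} \cdots x_{i+j-1}}.
\]
The base case $j = 1$ is the defining ratio $|A^0_i A^1_i|/|A^1_i A^0_{i+1}| = x_i$. For the inductive step, since $A^j_i$ is the intersection of the lines $A^{j-1}_i A^0_{i+j}$ and $A^{j-1}_{i+1} A^0_i$, one parametrizes each line, equates the two expressions, and matches the barycentric coefficients of $A^0_i$ and $A^0_{i+1}$. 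The resulting two equations determine the intersection and, after a short manipulation, the coefficients collapse into the stated telescoping form.

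Specializing to $j = n - 1$ gives the matrix $M$ of barycentric coefficients of the vertices of the final simplex, with $M_{ij} = (x_i x_{i+1} \cdots x_{j-1})/D_i$ (indices cyclic, empty product equal to $1$), where $D_i$ denotes the intended denominator factor in (\ref{e2}). Then $V_{A^{n-1}_1 \ldots A^{n-1}_n} = |\det M|$. Writing $p_k = x_1 x_2 \cdots x_k$ and $X = p_n$, one has $x_i x_{i+1} \cdots x_{j-1} = p_{j-1}/p_{i-1}$ for $j \ge i$ and $X p_{j-1}/p_{i-1}$ for $j < i$, which yields the factorization
\[
M \;=\; \mathrm{diag}\!\left(\tfrac{1}{p_{i-1} D_i}\right) \cdot C \cdot \mathrm{diag}(p_{j-1}),
\]
where $C_{ij} = 1$ for $j \ge i$ and $C_{ij} = X$ for $j < i$. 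In particular, the diagonal scaling factors telescope and one obtains $\det M = \det C / \prod_i D_i$.

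Finally, $\det C$ is computed by the row operations $R_i \leftarrow R_i - R_{i+1}$ for $i = 1, \ldots, n-1$: the last row $(X, X, \ldots, X, 1)$ is left untouched, while each earlier row reduces to a single nonzero entry $1 - X$ on the diagonal. Expansion along the last column gives $\det C = (1 - X)^{n-1}$, and combining with the preceding display produces exactly the right-hand side of (\ref{e2}). The main obstacle is the inductive step for the barycentric formula of $A^j_i$: solving the two-line intersection and verifying that the coefficients rearrange into the clean telescoping pattern requires careful bookkeeping, after which the matrix factorization and row reduction are short and mechanical.
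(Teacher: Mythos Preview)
Your argument is correct and follows the same overall strategy as the paper: both sides of (\ref{e2}) are identified with the volume $V_{A^{n-1}_1\ldots A^{n-1}_n}$, the left-hand side via Theorem~\ref{veta} and the right-hand side via a determinant in barycentric coordinates. The only difference is that the paper outsources the second computation to \cite{yq}, whereas you actually carry it out. Your barycentric formula
\[
A^j_i=\frac{A^0_i+x_iA^0_{i+1}+\cdots+x_i\cdots x_{i+j-1}A^0_{i+j}}{1+x_i+\cdots+x_i\cdots x_{i+j-1}}
\]
is the natural repackaging of the ratios in Lemma~\ref{lgen}, and the inductive step (intersecting the two lines and reading off the coefficients of $A^0_i$ and $A^0_{i+1}$) checks out cleanly: one finds $1-s=1/D_{i,j}$ with $D_{i,j}=1+x_iD_{i+1,j-1}$, which is exactly the telescoping you describe. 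The factorization $M=\mathrm{diag}(1/p_{i-1}D_i)\,C\,\mathrm{diag}(p_{j-1})$ and the row reduction giving $\det C=(1-X)^{n-1}$ are both correct; after taking absolute value (under the standing hypothesis $X=\prod x_i>1$) you land on the right-hand side of (\ref{e2}).

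What your version buys is self-containment: the reader does not need access to \cite{yq}. One small point worth making explicit (the paper is equally silent here): the geometric argument establishes (\ref{e2}) only on the open region $\prod x_i>1$, and the extension to all positive $x_i$ follows because both sides are rational functions in $x_1,\ldots,x_n$.
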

\begin{proof}
Assume $x_1 \cdot\ldots \cdot x_n > 1$ and consider $V_{\cap_{i=1}^n T_i}$. By Theorem \ref{veta}, this volume is given by the expression on the left-hand side of the above equation.
On the other hand, the volume of $\bigcap_{i=1}^n T_i$ can be determined using vector analysis and determinants and, by \cite{yq}, it is equal to 
\[\frac{(\prod_{i=1}^n x_i-1)^{n-1}}{\prod_{k=1}^n (1+ \sum_{b=k}^{k+n-1} \prod_{a=k}^b x_a)},\]
which is the right-hand side of the above equation.
\end{proof}

For an amusement of the reader we display the identity (\ref{e2}) for $n=5$:

\medskip

$1-\frac{x_1}{1+x_1}-\frac{x_2}{1+x_2}-\frac{x_3}{1+x_3}-\frac{x_4}{1+x_4}-\frac{x_5}{1+x_5}+\frac{x_1^2x_2}{(1+x_1)(1+x_1+x_1x_2)}
+\frac{x_2^2x_3}{(1+x_2)(1+x_2+x_2x_3)}$

$+\frac{x_3^2x_4}{(1+x_3)(1+x_3+x_3x_4)}
+\frac{x_4^2x_5}{(1+x_4)(1+x_4+x_4x_5)}
+\frac{x_5^2x_1}{(1+x_5)(1+x_5+x_5x_1)}$

$+\frac{x_1x_3}{(1+x_1)(1+x_3)}
+\frac{x_1x_4}{(1+x_1)(1+x_4)}
+\frac{x_2x_4}{(1+x_2)(1+x_4)}
+\frac{x_2x_5}{(1+x_2)(1+x_5)}
+\frac{x_3x_5}{(1+x_3)(1+x_5)}
$

$-\frac{x_1^3x_2^2x_3}{(1+x_1)(1+x_1+x_1x_2)(1+x_1+x_1x_2+x_1x_2x_3)}-\frac{x_2^3x_3^2x_4}{(1+x_2)(1+x_2+x_2x_3)(1+x_2+x_2x_3+x_2x_3x_4)}$

$-\frac{x_3^3x_4^2x_1}{(1+x_3)(1+x_3+x_3x_4)(1+x_3+x_3x_4+x_3x_4x_1)}-\frac{x_4^3x_1^2x_2}{(1+x_4)(1+x_4+x_4x_1)(1+x_4+x_4x_1+x_4x_1x_2)}$

$-\frac{x_5^3x_1^2x_2}{(1+x_5)(1+x_5+x_5x_1)(1+x_5+x_5x_1+x_5x_1x_2)}$

$-\frac{x_1^2x_2x_4}{(1+x_1)(1+x_1+x_1x_2)(1+x_4)}-\frac{x_1x_3^2x_4}{(1+x_1)(1+x_3)(1+x_3+x_3x_4)}-\frac{x_2^2x_3x_5}{(1+x_2)(1+x_2+x_2x_3)(1+x_5)}$

$-\frac{x_2x_4^2x_5}{(1+x_2)(1+x_4)(1+x_4+x_4x_5)}-\frac{x_3x_5^2x_1}{(1+x_3)(1+x_5)(1+x_5+x_5x_1)}$

$+\frac{x_1^4x_2^3x_3^2x_4}{(1+x_1)(1+x_1+x_1x_2)(1+x_1+x_1x_2+x_1x_2x_3)(1+x_1+x_1x_2+x_1x_2x_3+x_1x_2x_3x_4)}$

$+\frac{x_2^4x_3^3x_4^2x_5}{(1+x_2)(1+x_2+x_2x_3)(1+x_2+x_2x_3+x_2x_3x_4)(1+x_2+x_2x_3+x_2x_3x_4+x_2x_3x_4x_5)}$

$+\frac{x_3^4x_4^3x_5^2x_1}{(1+x_3)(1+x_3+x_3x_4)(1+x_3+x_3x_4+x_3x_4x_5)(1+x_3+x_3x_4+x_3x_4x_5+x_3x_4x_5x_1)}$

$+\frac{x_4^4x_5^3x_1^2x_2}{(1+x_4)(1+x_4+x_4x_5)(1+x_4+x_4x_5+x_4x_5x_1)(1+x_4+x_4x_5+x_4x_5x_1+x_4x_5x_1x_2)}$

$+\frac{x_5^4x_1^3x_2^2x_3}{(1+x_5)(1+x_5+x_5x_1)(1+x_5+x_5x_1+x_5x_1x_2)(1+x_5+x_5x_1+x_5x_1x_2+x_5x_1x_2x_3)}$

\bigskip

$=\frac{(x_1x_2x_3x_4x_5-1)^4}{(1+x_1+x_1x_2+x_1x_2x_3+x_1x_2x_3x_4)(1+x_2+x_2x_3+x_2x_3x_4+x_2x_3x_4x_5)
(1+x_3+x_3x_4+x_3x_4x_5+x_3x_4x_5x_1)}\times$

$\frac{1}{(1+x_4+x_4x_5+x_4x_5x_1+x_4x_5x_1x_2)
(1+x_5+x_5x_1+x_5x_1x_2+x_5x_1x_2x_3)}
$

\vskip10pt

Finally, formula (\ref{e-2}) in Theorem \ref{tetra-new}
was proven in \cite{lm} as a consequence of the identity
\[\begin{aligned}
&1-\frac{x_1}{(1+x_1)(1+x_2)(1+x_3)}-\frac{x_2}{(1+x_2)(1+x_3)(1+x_4)} -\frac{x_3}{(1+x_3)(1+x_4)(1+x_1)}\\
&-\frac{x_4}{(1+x_4)(1+x_1)(1+x_2)}
-\frac{x_1x_3}{(1+x_1)(1+x_3)}-\frac{x_2x_4}{(1+x_2)(1+x_4)}\\
&= \frac{1-x_1x_2x_3x_4}{(1+x_1)(1+x_2)(1+x_3)(1+x_4).}
\end{aligned}\]

\medskip

It would be interesting to obtain similar identities for higher dimensions. An analogous identity for $n=5$ is 

\medskip

$1-\frac{x_1}{(1+x_1) (1+x_3) (1+x_4) (1+x_5)}-\frac{x_2}{(1+x_1) (1+x_2) (1+x_4) (1+x_5)}-\frac{x_3}{(1+x_1) (1+x_2) (1+x_3) (1+x_5)}$

$- \frac{x_4}{(1+x_1) (1+x_2) (1+x_3) (1+x_4)}-\frac{x_5}{(1+x_2) (1+x_3) (1+x_4) (1+x_5)}-\frac{x_1 x_3}{(1+x_1) (1+x_3))}$

$-\frac{x_1 x_4}{(1+x_1) (1+x_4)}-\frac{x_2 x_4}{(1+x_2) (1+x_4)}-\frac{x_2 x_5}{(1+x_2) (1+x_5)}-\frac{x_3 x_5}{(1+x_3) (1+x_5)} +\frac{x_1 x_2 x_4}{(1+x_1) (1+x_2) (1+x_4)}$

$+\frac{x_1 x_3 x_4}{(1+x_1) (1+x_3) (1+x_4)}+\frac{x_1 x_3 x_5}{(1+x_1) (1+x_3) (1+x_5)}+\frac{x_2 x_3 x_5}{(1+x_2) (1+x_3) (1+x_5)}
+\frac{x_2 x_4 x_5}{(1+x_2) (1+x_4) (1+x_5)}$

$=\frac{1+x_1 x_2 x_3 x_4 x_5}{(1+x_1) (1+x_2) (1+x_3) (1+x_4) (1+x_5)}$.

\medskip

We finish by stating the formulas for the volumes of the previously considered simplices in the special case when $x_1=x_2=\dots =x_n=k$. 
In this case the volume $V=V_{\cap_{i=1}^n T_i}=\frac{|k-1|}{k^n-1}$. In particular, if $n=3$ and $k=2$, then $V=\frac17$; 
if $n=4$ and $k=2$, then $V=\frac{1}{15}$. 
The case when $n=3$ and $k=2$ is known in the literature as the area of the Feynman's triangle.

The volume of the simplex $A^1_1\ldots A^1_n$ in the special case $x_1=x_2=\dots =x_n=k$ equals 
$V_{A^1_1A^1_2A^1_3}=\frac{k^n+1}{(k+1)^n}=\frac13$ if $n=3$ and $k=2$, and equals
$V_{A^1_1A^1_2A^1_3A^1_4}=\frac{k^n-1}{(k+1)^n}=\frac{5}{27}$ if $n=4$ and $k=2$.

\medskip

{\bf Acknowledgement.} The authors are endebted to Professor Jose Alfredo Jimenez for his encouragement and help with the images presented in this article.

\end{document}